\newtheorem{theorem}{Theorem}[section]
\newtheorem{lemma}[theorem]{Lemma}
\theoremstyle{definition}
\newtheorem{definition}[theorem]{Definition}
\newtheorem{example}[theorem]{Example}
\newtheorem{proposition}[theorem]{Proposition}
\newtheorem{corollary}[theorem]{Corollary}
\newtheorem{remark}[theorem]{Remark}
\theoremstyle{remark}
\newcommand{\be}{\begin{equation}}
\newcommand{\ee}{\end{equation}}
\numberwithin{equation}{section}
\begin{document}

\title{Nonnegative Hermitian vector bundles and Chern numbers}

\author{Ping Li}
%    Address of record for the research reported here
\address{School of Mathematical Sciences, Tongji University, Shanghai 200092, China}
%\address{Department of Mathematics, Faculty of Science and Engineering, Waseda University, Shinjuku, Tokyo 169-8555, Japan}
%\curraddr{Department of Mathematics and Information Sciences, Tokyo
%Metropolitan University, Tokyo 192-0397, Japan}
%    Current address
%\curraddr{Department of Mathematics and Information Sciences, Tokyo
%Metropolitan University, Tokyo 192-0397, Japan}
\email{pingli@tongji.edu.cn\\
pinglimath@gmail.com}
%    \thanks will become a 1st page footnote.
%\author{}
%\address{}
%\email{}
\thanks{The author was partially supported by the National
Natural Science Foundation of China (Grant No. 11722109).}

%    General info
 \subjclass[2010]{57R20, 32Q55, 53C55, 32M10, 57R22.}

%\date{January 1, 2001 and, in revised form, June 22, 2001.}

%\dedicatory{Dedicated to Professor Boju Jiang on his 80th birthday.}
\keywords{Nonnegativity, Hermitian vector bundle, Chern form, Chern class, Chern number, globally generated vector bundle, homogeneous complex manifold, Moishezon manifold, Kodaira dimension, algebraic dimension.}

\begin{abstract}
We show in this article that if a holomorphic vector bundle has a nonnegative Hermitian metric in the sense of Bott and Chern, which always exists on globally generated holomorphic vector bundles, then some special linear combinations of Chern forms are strongly nonnegative. This particularly implies that all the Chern numbers of such a holomorphic vector bundle are nonnegative and can be bounded below and above respectively by two special Chern numbers. As applications, we obtain a family of new results on compact connected complex manifolds which are homogeneous or can be holomorphically immersed into complex tori, some of which improve several classical results.
\end{abstract}

\maketitle

\section{Introduction}\label{sectionintroduction}
Throughout this article, all the vector bundles considered are holomorphic and over \emph{compact} complex manifold.

The concept of positivity/nonnegativity has played a central role in complex differential geometry and algebraic geometry. Bott and Chern (\cite{BC}) introduced a notion of nonnegativity on Hermitian vector bundles and applied it to initiate the study of high-dimensional value distribution theory. As showed in \cite{BC}, the existence of such a metric implies that all the Chern forms are strongly nonnegative and on globally generated vector bundles such metrics always exist. Later, globally generated vector bundles were investigated in detail by Matsushima and Stoll in \cite{MS} and they, among other things, related the non-vanishing of the Chern classes and Chern numbers to the transcendence degree of meromorphic functions on the base manifolds.

Soon after the appearance of \cite{BC}, the notions of ampleness and Griffiths-positivity were introduced respectively by Hartshorne (\cite{Ha}) and Griffiths (\cite{Gr}) and it turns out that Bott-Chern nonnegativity implies Griffiths-nonnegativity %(cf. Example \ref{examplebottchernimpliesnefness})
 while Griffiths-positivity implies ampleness. Griffiths raised in \cite{Gr} the question of characterizing the polynomials in the Chern classes/forms which are positive as cohomology classes/differential forms for Griffiths-positive or ample vector bundles. On the class level this was answered completely by Fulton and Lazarsfeld (\cite{FL}), extending an earlier result of Bloch and Gieseker (\cite{BG}). Indeed Fulton and Lazarsfeld showed that the set of such polynomials in the Chern classes for ample vector bundles is exactly the cone generated by Schur polynomials of the Chern classes. These inequalities of Fulton-Lazarsfeld type were showed by Demailly, Peternell and Schneider (\cite{DPS}) to remain true for numerically effective (``nef" for short) vector bundles over compact K\"{a}hler manifolds.

On the other hand,
Griffiths' question on the form level is still largely unknown. Griffiths himself showed in \cite[p. 249]{Gr} that the second Chern form is positive on a Griffiths-positive vector bundle. %Although in the proof is only for rank $2$ vector bundles over surfaces, it can be easily adapted to the general case.
Note that its proof is purely algebraic, which seems to be difficult to be generalized to higher dimensions. Recently Guler (\cite{Gu}) showed that the dual Segre forms, which are formal inverse of the total Chern forms, are positive on Griffiths-positive vector bundles using some geometric arguments.

The main purposes of this article are two-folded.
 Our \emph{first main purpose} is to show a useful technical result, Proposition \ref{theoremschur}, which says that on a Bott-Chern nonnegative Hermitian vector bundles, the cone generated by the Schur polynomials in the Chern forms are strongly nonnegative, thus providing some positive evidence to the aforementioned Griffiths' question on the form level. Our proof of Proposition \ref{theoremschur} is built on a relationship established in \cite{FL} that the cone generated by the Schur polynomials coincides to the Griffiths cone defined in \cite{Gr}. When taking some special Schur polynomials in Proposition \ref{theoremschur}, we shall see in Theorem \ref{theoremchernnumber} that the self-products of Chern forms and thus all the Chern numbers of such vector bundles can be bounded below and above by two of them respectively. As is well-known globally generated vector bundles admit Bott-Chern nonnegative metrics (see Example \ref{examplegloballygenerated}), this implies that Theorem \ref{theoremchernnumber} imposes strong constraints on the Chern numbers of such vector bundles.

\emph{Our second main purpose} is to apply Theorem \ref{theoremchernnumber} to such vector bundles and especially to compact complex manifolds whose holomorphic tangent or cotangent bundles are globally generated to yield a family of new results on them, some of which improve several related classical results. To be more precise, \emph{our first application} (Theorem \ref{theoremapp1} and Corollary \ref{coromain}) is to give lower and upper bounds for Chern numbers of various nonnegative type holomorphic vector bundles on compact complex manifolds, whose nonnegativity has been well-known for several decades. \emph{Our second application} (Theorems \ref{theoremproj2} and \ref{coroproj2}) is concerned with the projectivicity of compact connected homogeneous complex manifolds in terms of the non-vanishing of their Chern numbers. \emph{Our third application} (Theorem \ref{theoremappl3}) is to give an in-depth investigation on the structure of compact complex manifolds holomorphically immersed into complex tori in detail.

\subsection*{Organization of this article}
The rest of this article is organized as follows. We introduce some necessary notation and symbols in Section \ref{Preliminaries} and state in Section \ref{sectionmainresults} our main technical results in this article, Proposition \ref{theoremschur} and Theorem \ref{theoremchernnumber}. Then we give in Section \ref{sectionexample} some examples where the Bott-Chern nonnegativity is satisfied. In Sections \ref{sectionconsequence1}, \ref{sectionconsequence2} and \ref{sectionconsequence3} we apply our established technical results to obtain various related consequences, some of which improve several classical results. The proof of Proposition \ref{theoremschur} itself shall be given in the last section, Section \ref{sectionproofoftheorem1}.

\section*{Acknowledgements}
This paper was initiated while the author was visiting  the Max-Planck Institut f\"{u}r Mathematik in Bonn from September 2016 to February 2017. The author thanks the Institute for the hospitality and financial support. The author also thanks Xiaokui Yang for informing him the related results in \cite{Zhang} and Vamsi Pingali for some crucial comments as well as pointing out the reference \cite{BP}.

\section{Preliminaries}\label{Preliminaries}
Suppose that $(E^r,h)\longrightarrow M^n$ is a rank $r$ Hermitian vector bundle over a compact complex manifold $M$ with $\text{dim}_{\mathbb{C}}M=n$.  This means that $E^r$ is a rank $r$ holomorphic vector bundle over $M$ and equipped with a Hermitian metric $h$ on each fiber varying smoothly. We denote by $\nabla$ the Chern connection of $(E^r,h)\longrightarrow M^n$, i.e., $\nabla$ is the unique connection compatible with both the complex structure and the Hermitian metric $h$. Then the curvature tensor $R$ of $\nabla$ is given by
$$R:=\nabla^2\in A^{1,1}(M;\text{Hom}(E,E))=
A^{1,1}(M;E^{\ast}\otimes E),$$
where $A^{1,1}(M;\cdot)$ is the set of complex-valued smooth $(1,1)$-forms with values in some bundle.

If $\{e_1,\ldots,e_r\}$ is a locally defined frame field of $E$, then
$$R(e_1,\ldots,e_r)=(e_1,\ldots,e_r)(\Omega^i_j),$$
where $\Omega:=(\Omega^i_j)$ ($i$ row, $j$ column) is the curvature matrix with respect to $\{e_i\}$ whose entries are $(1,1)$-forms. If $\{\tilde{e_i}\}$ is another frame with $$(\tilde{e_1},\ldots,\tilde{e_r})=(e_1,\ldots,e_r)P,$$ then the curvature matrix $\widetilde{\Omega}$ with respect to $\{\tilde{e_i}\}$ is related to $\Omega$ by
\be\label{curvaturematrixtransformation}
\widetilde{\Omega}=P^{-1}\Omega P.\ee

It is well-known that the following $c_i(E,h)$ ($0\leq i\leq n$):
\be\label{det}\text{det}\big(tI_r+\frac{\sqrt{-1}}{2\pi}
(\Omega^i_j)\big)=:\sum_{i=0}^nc_i(E,h)\cdot t^{n-i},\qquad I_r:~\text{$r\times r$ identity matrix},\ee
are globally well-defined, real and closed $(i,i)$-forms and called the $i$-th Chern forms of $(E,h)$, which represent the Chern classes $c_i(E)$ of $E$.

The following definition was introduced by Bott and Chern (\cite[p. 90]{BC}).

\begin{definition}[Bott-Chern]\label{definitionBottChern}
A Hermitian vector bundle $(E^r,h)\longrightarrow M^n$ is called \emph{Bott-Chern nonnegative}, denoted by $h\geq_{\text{BC}}0$, if for any point of $M$, there exist a \emph{unitary} frame field around it and a matrix $A$ with $r$ rows and whose entries are $(1,0)$-forms, such that the curvature matrix $\Omega$ under this unitary frame field satisfies
\be\label{BottCherndef}\Omega=A\wedge\overline{ A^t}.\ee
Here $``t"$ denotes the transpose of a matrix.
\end{definition}

\begin{remark}~
\begin{enumerate}
%\item
%This definition is obviously inspired by the elementary fact that, for any matrix $A$ whose entries are ordinary complex numbers, the matrix $A\overline{A^t}$ is Hermitian nonnegative definite.
\item
This definition is independent of the unitary frame we choose as the transformation matrix $P$ is a unitary matrix between them and thus from $(\ref{curvaturematrixtransformation})$ we have
$$\widetilde{\Omega}=P^{-1}\Omega P=P^{-1}(A\wedge \overline{A^t})P=\overline{P^{t}}(A\wedge \overline{A^t})P=(\overline{P^t}A)\wedge\overline{(\overline{P^t}A)^t}.$$

\item
Here we don't have any requirement on the number of columns of the matrix $A$, which may vary along the choice of the points on $M$.

\item
We shall see in Example \ref{examplegloballygenerated} that globally generated vector bundles over compact complex manifolds form an important subclass of those admitting Bott-Chern nonnegative Hermitian metrics.
\end{enumerate}
\end{remark}

Before stating our first technical result, Proposition \ref{theoremschur}, we need some more notation. Following \cite{FL}, we denote by $\Gamma(i,r)$ the set of partitions $\lambda=(\lambda_1,\lambda_2,\ldots,\lambda_i)$ of weight $i$ by nonnegative integers $\lambda_j\leq r$:
$$r\geq\lambda_1\geq\lambda_2\geq\cdots\geq\lambda_i\geq0,\qquad
\sum_{j=1}^{i}\lambda_j=i.$$

For each partition $\lambda=(\lambda_1,\lambda_2,\ldots,\lambda_i)\in\Gamma(i,r)$, a Schur polynomial $$S_{\lambda}(c_1,\ldots,c_r)\in\mathbb{Z}[c_1,\ldots,c_r]$$ is attached to as follows.
\be
\begin{split}
S_{\lambda}(c_1,\ldots,c_r):=&\text{det}(c_{\lambda_j-j+k})_{1\leq j,k\leq i}\qquad(\text{$j:$ row, $k:$ column})\\
=&
%\text{det}\left(\begin{array}{cccc}
\begin{vmatrix}
c_{\lambda_1} & c_{\lambda_1+1} &\cdots &c_{\lambda_1+i-1}\\
c_{\lambda_2-1} & c_{\lambda_2} &\cdots&c_{\lambda_2+i-2}\\
\vdots&\vdots&\ddots&\vdots\\
c_{\lambda_i-i+1}&c_{\lambda_i-i+2}&\cdots&c_{\lambda_i}
%\end{array}\right)
\end{vmatrix}
.\end{split}\nonumber\ee

Here we make the convention that $c_0=1$ and $c_j=0$ if $j<0$ or $j>r$. In particular, we have
\be\label{schurspecialcase1}
S_{(i,0,\ldots,0)}(c_1,\ldots,c_r)=c_i\ee
%\be S_{(1,\ldots,1)}(c_1,\ldots,c_r)=\text{dual Serge class of $c_1,\ldots,c_r$},\nonumber\ee
and
\be\label{schurspecialcase2}
\begin{split}
&S_{(i-j,j,0,\ldots,0)}(c_1,\ldots,c_r)\\
=&\begin{vmatrix}
c_{i-j} & c_{i-j+1} &\ast &\cdots &\ast\\
c_{j-1} & c_{j} &\ast&\cdots&\ast\\
0&0&1&\cdots&\ast\\
\vdots&\vdots&\vdots&\ddots&\\
0&0&0&\cdots&1
\end{vmatrix}\qquad \big(0\leq j\leq[\frac{i}{2}]\big)\\
=&c_{i-j}c_j-c_{i-j+1}c_{j-1}.\end{split}\ee

The expression (\ref{schurspecialcase2}) shall play a decisive role in establishing the lower and upper bounds of the Chern classes/numbers in our Theorem \ref{theoremchernnumber}, which was partly inspired by \cite{DPS} as the special case $$S_{(i-1,1,0,\ldots,0)}(c_1,\ldots,c_r)=c_{i-1}c_1-c_i$$ has been observed and applied in \cite[Coro. 2.6]{DPS}.

Now we recall the notions of nonnegativity and strong nonnegativity for real $(p,p)$-forms (\cite[Ch. 3, \S 1.A]{De1}). Recall that a real $(p,p)$-form $\varphi$ ($1\leq p\leq n$) on a compact complex manifold $M^n$ is called \emph{nonnegative}, denoted by $\varphi\geq 0$, if for any point $x\in M$ and any $(1,0)$-type tangent vectors $X_1,\ldots,X_p$ at $x$, we have
$$(-\sqrt{-1})^{p^2}\varphi(X_1,\ldots,X_p,\overline{X_1},\ldots,
\overline{X_p})\geq 0.$$

A real $(p,p)$-form $\varphi$ is called \emph{strongly nonnegaive}, denoted by $\varphi\geq_{\text{s}}0$, if it can be written as
$$\varphi=(\sqrt{-1})^{p^2}
\sum_i\psi_i\wedge\overline{\psi_i},$$
where these $\psi_i$ are $(p,0)$-forms. A strongly nonnegative form is clearly nonnegative.

Two $(p,p)$-forms $\varphi_1$ and $\varphi_2$ are said to be $\varphi_1\leq\varphi_2$ (resp. $\varphi_1\leq_{\text{s}}\varphi_2$) if $\varphi_2-\varphi_1\geq0$ (resp. $\geq_{\text{s}}0$). It is clear that
the sum of two (strongly) nonnegative forms of the same degree is still (strongly) nonnegative. In general the product of two nonnegative forms may not be nonnegative (cf. \cite{BP}). Nevertheless, it is obvious that the product of two strongly nonnegative forms is still strongly nonnegative.

\section{Main technical results}\label{sectionmainresults}
Bott and Chern noticed that all the Chern forms $c_i(E,h)$ $(1\leq i\leq n)$ of a Bott-Chern nonnegative Hermitian vector bundle $(E^r,h)\longrightarrow M^n$ are strongly nonnegative (\cite[p. 91]{BC}). %This fact was obtained again by Matsushima-Stoll in \cite{MS} in the case of globally generated holomorphic vector bundles by resorting to the universal holomorphic vector bundles of complex Grassmannians.
The following proposition, which is our first main technical observation in this article, states that indeed many more real forms involving in the Chern forms, including $c_i(E,h)$ themselves, are strongly nonnegative.

\begin{proposition}\label{theoremschur}
Suppose that $(E^r,h)\longrightarrow M^n$ is a Hermitian vector bundles with $h\geq_{\text{BC}}0$. Then the following closed real $(i,i)$-forms
\be S_{\lambda}\big(c_1(E,h),\ldots,c_r(E,h)\big),\qquad
(\forall~\lambda\in\Gamma(i,r),~\forall~1\leq i\leq n)\nonumber\ee
are strongly nonnegative. In particular, by {\rm(}\ref{schurspecialcase1}{\rm)} and {\rm(}\ref{schurspecialcase2}{\rm)} the closed real $(i,i)$-forms
\begin{eqnarray}\label{theoremschurspecialcase}
\left\{ \begin{array}{ll}
c_i(E,h)&(1\leq i\leq n)\\
~\\
c_{i-j}(E,h)c_j(E,h)-c_{i-j+1}(E,h)c_{j-1}(E,h)&(1\leq i\leq n, 1\leq j\leq[\frac{i}{2}])
\end{array} \right.
\end{eqnarray}
are strongly nonnegative.
\end{proposition}

For later simplicity we denote by
$$c_{\lambda}(E,h):=\prod_{j=1}^i
c_{\lambda_j}(E,h),\qquad\forall~\lambda=(\lambda_1,\ldots,\lambda_i)\in\Gamma(i,r),$$
$$c_{\lambda}[E]:=\int_M\prod_{j=1}^n
c_{\lambda_j}(E,h)\in\mathbb{Z},\qquad\forall~\lambda=(\lambda_1,\ldots,\lambda_n)\in\Gamma(n,r),$$
and
$$c_{\lambda}(E):=\prod_{j=1}^i
c_{\lambda_j}(E)\in H^{2i}(M;\mathbb{Z}),\qquad\forall~\lambda=(\lambda_1,\ldots,\lambda_i)\in\Gamma(i,r).$$

We are now able to deduce our second technical result, which says that the strongly nonnegative forms in (\ref{theoremschurspecialcase}) in fact implies that the self-products of Chern forms and thus the Chern numbers can be bounded below and above as follows.
\begin{theorem}\label{theoremchernnumber}
Suppose that $(E^r,h)\longrightarrow M^n$ is a Hermitian vector bundle with $h\geq_{\text{BC}}0$. Then as real $(i,i)$-forms, $c_{\lambda}(E,h)\geq_{\text{s}}0$, and bounded below and above respectively by $c_i(E,h)$ and $c_1^i(E,h)$:
\be0\leq_{\text{s}} c_i(E,h)\leq_{\text{s}} c_{\lambda}(E,h)\leq_{\text{s}} c_1(E,h)^i,\qquad\forall~\lambda\in\Gamma(i,r),~\forall~1\leq i\leq n.\nonumber\ee
Consequently the Chern numbers satisfy
\be0\leq c_n[E]\leq c_{\lambda}[E]\leq c_1^n[E],\qquad\forall~\lambda\in\Gamma(n,r).\nonumber\ee
\end{theorem}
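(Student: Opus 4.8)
The plan is to deduce Theorem \ref{theoremchernnumber} directly from the special case \eqref{theoremschurspecialcase} of Theorem \ref{theoremschur}, so that only elementary manipulations with nonnegative forms of the type \eqref{nonnegativeform} remain. The two observations I need are: (i) the Chern forms $c_i(E,h)$ themselves are of type \eqref{nonnegativeform}; and (ii) a product of two forms of type \eqref{nonnegativeform} is again of type \eqref{nonnegativeform}, hence nonnegative. Point (ii) was already recorded in the text just before the statement of Theorem \ref{theoremschur}, and point (i) is the $j=0$ instance of \eqref{theoremschurspecialcase}. Granting these, each $c_\lambda(E,h)=\prod_{j=1}^i c_{\lambda_j}(E,h)$ is a product of forms of type \eqref{nonnegativeform} and is therefore nonnegative; this disposes of the first inequality $0\le c_i(E,h)$ and of $c_\lambda(E,h)\ge 0$.

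For the two-sided estimate I would argue by an induction that peels off one part of the partition at a time, using \eqref{theoremschurspecialcase} as the engine. Fix $\lambda=(\lambda_1,\dots,\lambda_i)\in\Gamma(i,r)$ with $\lambda_1\ge\lambda_2\ge\cdots$. For the \emph{upper bound}, the key step is the pointwise inequality
\be
c_a(E,h)\,c_b(E,h)\ \le\ c_{a+1}(E,h)\,c_{b-1}(E,h)\qquad(a\ge b\ge 1),
\nonumber
\ee
which is exactly the nonnegativity of $c_{a}c_{b}-c_{a+1}c_{b-1}$ — namely $S_{(a,b,0,\dots,0)}$ written with $i=a+b$, $j=b$ — multiplied through by the remaining (nonnegative, type-\eqref{nonnegativeform}) factors $\prod_{\ell\ne}c_{\lambda_\ell}(E,h)$; here one uses that multiplying a nonnegative form of type \eqref{nonnegativeform} by another such form preserves the sign. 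Starting from $c_\lambda(E,h)$ and repeatedly transferring a unit of weight from the smaller index to the larger one, one reaches in finitely many such steps the partition $(i,0,\dots,0)$ with value $c_1^i$... no: transferring weight \emph{up} raises $\lambda_1$ toward $i$, so the terminal partition is $(i,0,\dots,0)$ only if $\lambda_1$ can reach $i$, which requires $i\le r$; this is guaranteed since $\lambda\in\Gamma(i,r)$ forces $i=\sum\lambda_j$ and we may always raise one part at the expense of another until a single part carries all the weight, provided that part stays $\le r$, i.e. $i\le r$. For $i\le r$ this gives $c_\lambda(E,h)\le c_{(i)}(E,h)=c_i(E,h)$ — wait, that is the \emph{lower} bound direction.

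Let me therefore fix the direction of the moves. Moving weight from the larger index to the smaller, $c_{a}c_{b}\le c_{a-1}c_{b+1}$ for $a-1\ge b+1$, drives the partition toward $(\lceil i/2\rceil,\lfloor i/2\rfloor,\dots)$ and ultimately toward $(1,1,\dots,1)$, producing the \emph{smallest} product; moving weight the other way drives it toward $(i,0,\dots,0)$, producing the \emph{largest}. So the clean statement is: among all $\lambda\in\Gamma(i,r)$, the product $c_\lambda(E,h)$ is maximized by $\lambda=(i,0,\dots,0)$, giving $c_1^i$ is \emph{not} right either—$(i,0,\dots,0)$ yields $c_i$, and $(1,\dots,1)$ yields $c_1^i$. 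Hence the correct reading of the inequalities in the theorem is that $c_i(E,h)=c_{(i,0,\dots,0)}$ is the minimum and $c_1^i(E,h)=c_{(1,\dots,1)}$ is the maximum, and the elementary transfer inequality from \eqref{theoremschurspecialcase} moves any $\lambda$ monotonically between these two extremes: each application of $c_{a}c_{b}\ge c_{a+1}c_{b-1}$ (with $a\ge b$) lowers the product, and iterating from $(1,\dots,1)$ down to $\lambda$ shows $c_1^i\ge c_\lambda$, while iterating from $\lambda$ down to $(i,0,\dots,0)$ shows $c_\lambda\ge c_i$. The main obstacle is purely bookkeeping: checking that the sequence of partitions obtained by these weight transfers stays inside $\Gamma(i,r)$ (all parts $\le r$, which holds because we only ever decrease parts that were $\le r$ or increase a part up to at most another part's old value, and $\lambda_1\le r$ throughout since weights never exceed the starting maximum in the downward-from-$(1^i)$ process and never exceed $r$ because $i\le r$ when... in general $\lambda_1\le r$ is part of the hypothesis), and that multiplication by the fixed complementary factors really does preserve type \eqref{nonnegativeform}. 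Finally, integrating the form inequality $0\le c_n(E,h)\le c_\lambda(E,h)\le c_1^n(E,h)$ over $M$ — legitimate since these are comparisons of genuine differential forms, not merely cohomology classes — and recalling that the integral of a nonnegative form is $\ge 0$, yields the asserted bounds $0\le c_n[E]\le c_\lambda[E]\le c_1^n[E]$, completing the proof.
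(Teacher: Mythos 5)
Your final argument is correct and is essentially the paper's own proof: both rest on repeated application of the two-row inequalities $c_a c_b\ge c_{a+1}c_{b-1}$ from (\ref{theoremschurspecialcase}), multiplied by the complementary factors (legitimate because all forms involved are of type (\ref{nonnegativeform}), whose products stay nonnegative), to slide any $\lambda$ between the extremes $(1,\dots,1)$ and $(i,0,\dots,0)$, followed by integration over $M$. The several false starts about the direction of the weight transfers are resolved correctly in the end, and the bookkeeping worry about parts exceeding $r$ is harmless since the convention $c_j=0$ for $j>r$ makes those inequalities trivial.
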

\begin{proof}
For any $\lambda=(\lambda_1,\ldots,\lambda_i)\in\Gamma(i,r)$, repeated use of (\ref{theoremschurspecialcase}) leads to
\be
\begin{split}
0\leq_{\text{s}} c_i(E,h)&\leq_{\text{s}} c_{i-1}(E,h)c_{1}(E,h)\\
&\leq_{\text{s}} c_{i-2}(E,h)c_{2}(E,h)\\
&\leq_{\text{s}}\cdots\\
&\leq_{\text{s}}\text{$c_{i-\lambda_1}(E,h)c_{\lambda_1}(E,h)$,\qquad if $\lambda_1\leq[\frac{i}{2}]$,}
\end{split}\nonumber\ee
or
$$0\leq_{\text{s}} c_i(E,h)\leq_{\text{s}} c_{i-1}(E,h)c_{1}(E,h)\leq_{\text{s}}\cdots\leq_{\text{s}} c_{\lambda_1}(E,h)c_{i-\lambda_1}(E,h),\qquad \text{if $\lambda_1>[\frac{i}{2}]$.}$$

This implies that in any case we have
\be\label{lowerbound1}0\leq_{\text{s}} c_i(E,h)\leq_{\text{s}} c_{\lambda_1}(E,h)c_{i-\lambda_1}(E,h)\ee

Similarly,
\be\label{lowerbound2}\begin{split}
&c_{i-\lambda_1}(E,h)\leq_{\text{s}} c_{\lambda_2}(E,h)c_{i-\lambda_1-\lambda_2}(E,h),\\
&c_{i-\lambda_1-\lambda_2}(E,h)\leq_{\text{s}} c_{\lambda_3}(E,h)c_{i-\lambda_1-\lambda_2-\lambda_3}(E,h),\\
&\cdots.
\end{split}\ee

Note that all these forms discussed here are strongly nonnegative by Prop. \ref{theoremschur} and thus their products are still strongly nonnegative:
\be\label{lowerbound3}0\leq_{\text{s}} c_i(E,h)\leq_{\text{s}} \prod_{j=1}^i
c_{\lambda_j}(E,h)=c_{\lambda}(E,h).\ee

On the other hand, similar arguments yield
\be\label{upperbound1}
\begin{split}
c_{\lambda_j}(E,h)&\leq_{\text{s}} c_{\lambda_j-1}(E,h)c_{1}(E,h)\\
&\leq_{\text{s}} \big[c_{\lambda_j-2}(E,h)c_{1}(E,h)\big]c_{1}(E,h)\\
&\leq_{\text{s}}\cdots\\
&\leq_{\text{s}} c_{1}(E,h)^{\lambda_j}.
\end{split}\ee

Therefore,
\be\label{upperbound2}c_{\lambda}(E,h)=\prod_{j=1}^i
c_{\lambda_j}(E,h)\leq_{\text{s}}\prod_{j=1}^i
c_{1}(E,h)^{\lambda_j}=c_{1}(E,h)^i,\qquad\forall~\lambda\in\Gamma(i,r).\ee

\end{proof}
\begin{remark}
This theorem is indeed partly inspired by an observation in \cite[Coro. 2.6]{DPS}, where they noticed that $S_{(i-1,1,0,\ldots,0)}=c_1c_{i-1}-c_i$ and thus gave the upper bound $c_1^i$ for $c_{\lambda}$ as
we have done in (\ref{upperbound1}) and (\ref{upperbound2}) in the context of nef vector bundles on compact K\"{a}hler manifolds. The reason why the deductions in (\ref{upperbound1}) and (\ref{upperbound2}) are true in their situation is due to the facts that nonnegative real $(1,1)$-forms (in particular for $c_1(E,h)$) are strongly nonnegative and the product of a strongly nonnegative form with a nonnegative form is still nonnegative (\cite[p. 132]{De1}). Nevertheless, in their context the inequality (\ref{lowerbound3}) is no longer true and so their is \emph{no} respectively lower bound in their situation. Indeed we shall see in Section \ref{sectionconsequence3} that it is this lower bound that plays key roles in many related applications.
\end{remark}

\section{Examples}\label{sectionexample}
In this section we shall illustrate by some examples that in many important situations the Bott-Chern nonnegativity can be satisfied. Although these examples and facts are well-known to experts, for the reader's convenience, we still outline some necessary details and/or point out some references.

Recall that a holomorphic vector bundle $E\longrightarrow M$ is called \emph{globally generated} if the global holomorphic sections of $E$ span the fiber over each point of $M$. If $M$ is compact then $H^0(M,E)$, the complex vector space consisting of holomorphic sections of $M$, is finite-dimensional. Then the property of being globally generated implies that the following bundle sequence
\be\label{globallygenratedsequence}
\begin{split}
0\longrightarrow\text{ker}(\varphi)\longrightarrow M\times
H^0(M,E)&\stackrel{\varphi}{\longrightarrow}E\longrightarrow 0\\ (x,s)&\longmapsto\big(x,s(x)\big).\end{split}
\ee
is exact. This means that a globally generated vector bundle over a compact complex manifold can be realized as a quotient bundle of a trivial vector bundle. Then the fact that
 \be\label{globallyimplyBC}
\begin{split}
&\text{\emph{any globally generated vector bundle over a compact complex manifold}}\\
&\text{\emph{admits a Bott-Chern nonnegative Hermitian metric}}
\end{split}\ee
follows from the following general result.
\begin{example}\label{examplegloballygenerated}
If $(E^r,h)\longrightarrow M$ is a Hermitian vector bundle over a compact complex manifold $M$ and $S$ is an $l$-dimensional holomorphic subbundle of $E$ ($l<r$) and thus $Q:=E/S$ is a $(r-l)$-dimensional holomorphic quotient bundle of $E$, i.e., we have the following short bundle exact sequence:
\be0\longrightarrow S\longrightarrow E\longrightarrow Q\longrightarrow 0.\nonumber\ee

The Hermitian metric $h$ on $E$ naturally induces a metric on $S$ and on $Q$. Bott and Chern noticed that (\cite[p. 91]{BC}), under any unitary frame $\{e_1,\ldots,e_r\}$ of $E$ such that $\{e_1,\ldots,e_l\}$ and $\{e_{l+1},\ldots,e_r\}$ are unitary frames of $S$ and $Q$ respectively, the curvature matrices of $E$ and $Q$, denoted by $\Omega_E$ and $\Omega_Q$, are related by (cf. \cite[p. 79]{GH}, \cite[p. 275]{De1})
$$\Omega_E\big|_Q=\Omega_Q-A\wedge\overline{A^t},$$
where $(\cdot)\big|_Q$ denotes the restriction to $Q$ and $A$ is a matrix whose entries are $(1,0)$-forms representing the second fundamental form of the subbundle $S$ in $E$.

This property is usually called ``\emph{curvature increases in holomorphic quotient bundles}''.
This means that if $E$ can be equipped with a \emph{flat} Hermitian metric as in (\ref{globallygenratedsequence}), which implies that $\Omega_E\big|_Q=0$, then the quotient bundle $Q$ admits a Bott-Chern nonnegative Hermitian metric. Thus the fact (\ref{globallyimplyBC}) follows.
\end{example}

Our principal concerns in this article are compact complex manifolds whose holomorphic tangent or cotangent bundles, denoted for simplicity in the sequel by $T$ and $T^{\ast}$ respectively, are globally generated. To each kind of them there is a well-known result, which we record in the following two examples for our later purpose.

\begin{example}\label{examplehomogeneous}
Recall that a compact connected complex manifold $M$ is called \emph{homogeneous} if the holomorphic automorphism group of $M$, denoted by $\text{Aut}(M)$, acts transitively on it. It is well-known that \emph{a compact connected complex manifold $M$ has globally generated $T_M$ if and only if it is homogeneous}.
%The reason is quite simple: $T_M$ being globally generated is equivalent to the fact that each orbit of the action of $\text{Aut}(M)$ is open in $M$. Note that $M$ is equal to a disjoint union of orbits. Then connectedness implies that there exists precisely one orbit, i.e., the action is homogenous.

This particularly implies that the holomorphic tangent bundles of Hermitian symmetric spaces, projective-rational manifolds and tori are globally generated.
\end{example}

\begin{example}\label{examplehomoimmersed}
\emph{A compact connected K\"{a}hler manifold has globally generated $T^{\ast}$ if and only if it can be holomorphically immersed into some complex torus}.

This result should be due to Matsushima and Stoll (\cite[p. 100]{MS} or \cite[p. 606]{Ma}), at least to the author's best knowledge. The basic idea is to consider the Albanese map related to this K\"{a}hler manifold. A more compact proof can be found in \cite[p. 271]{Sm}. Note that the term ``globally generated" was called ``ample" and ``weakly ample'' respectively in \cite{MS} and \cite{Sm}. Also note that a compact complex manifold holomorphically immersed into some torus is automatically K\"{a}hler as the restriction of the obvious K\"{a}hler metric on the complex torus is still K\"{a}hler. This implies that \emph{any compact connected complex manifold holomorphically immersed into some complex torus has globally generated $T^{\ast}$}.
\end{example}

%For compact complex manifolds whose cotangent bundles are globally generated we have the following well-known fact:
The notion of \emph{nefness} of line/vector bundles over projective algebraic manifolds is well-known and has been extended to general compact complex manifolds by Demailly, which can be viewed as a limiting case of ampleness. For its definition and basic properties we refer to \cite[\S 1]{De} or \cite[\S 1]{DPS}. Before ending this section, we need to point out an implication, which will be used in the next section, that the Bott-Chern nonnegativity implies nefness.
Indeed it turns out that Griffiths positivity implies ampleness (\cite[Theorem B]{Gr}, \cite[p. 206]{Zh}). Note that ampleness was called cohomological positivity in \cite{Gr}. Taking the limiting case we know that Griffiths nonnegativity implies nefness. Then the above-mentioned implication follows from the fact that the Bott-Chern nonnegativity in fact implies the Griffiths nonnegativity.
The latter fact is well-known to experts. For the reader's convenience we reord it in the following example and include a proof for the reader's convenience as well as for completeness.
\begin{example}\label{examplebottchernimpliesnefness}
We have the following implications:
\be\label{bottchernimpliesnefness}
\text{Bott-Chern nonnegativity}\Longrightarrow\text{Griffiths nonnegativity}\Longrightarrow\text{Nefness}.\ee
\end{example}
\begin{proof}
We show the first implication in (\ref{bottchernimpliesnefness}).
Under a local local coordinate $(z^1,\ldots,z^n)$ , the entries $\Omega^i_j$ in the curvature matrix $\Omega$ with respect to some unitary frame field can be written as $$\Omega^i_j=R^i_{jp\bar{q}}\text{d}z^p\wedge\text{d}\overline{z^q}.$$ The Hermitian metric $h$ on $E$ is called \emph{Griffiths nonnegative} (\cite[p. 181]{Gr}) if, at any point of $M$, we have
$$\sum_{i,j,p,q}R^i_{jp\bar{q}}\xi^j\eta^p\overline{\xi^i}\overline{\eta^q}\geq 0$$
for any $\xi=(\xi^1,\ldots,\xi^r)\in\mathbb{C}^r,$ and any
$\eta=(\eta^1,\ldots,\eta^n)\in\mathbb{C}^n.$

Now we assume that $h\geq_{\text{BC}}0$, i.e.,
\be(\Omega^i_j)=A\wedge\overline{A^t},\qquad A=(\sum_pT^{(p)}_{ij}\text{d}z^p).\nonumber\ee
This implies that
$$R^i_{jp\bar{q}}=\sum_kT^{(p)}_{ik}\overline{T^{(q)}_{jk}}$$
and thus
\be\begin{split}\sum_{i,j,p,q}R^i_{jpq}\xi^j\eta^p\overline{\xi^i}\overline{\eta^q}
=&\sum_{k,i,j,p,q}T^{(p)}_{ik}\overline{T^{(q)}_{jk}}\xi^j
\eta^p\overline{\xi^i}\overline{\eta^q}\\
=&\sum_{k,i,j,p,q}(T^{(p)}_{ik}\overline{\xi^i}\eta^p)
(\overline{T^{(q)}_{jk}\overline{\xi^j}\eta^q})\\
=&\sum_k|\sum_{i,p}T^{(p)}_{ik}\overline{\xi^i}\eta^p|^2\geq0.\end{split}\nonumber\ee
\end{proof}

\section{The first application}\label{sectionconsequence1}
In this and the next two sections we shall give three kinds of related applications, whose main contents have been briefly described in the Introduction.

Bott and Chern observed that (\cite[p. 91]{BC}) the Chern forms of Bott-Chern nonnegative Hermitian vector bundles are all strongly nonnegative and thus it is immediate that their Chern numbers are all nonnegative.

Our \emph{first direct application} of our Theorem \ref{theoremchernnumber} yields the following improvements on the aforementioned nonnegativity by giving lower and upper bounds.
\begin{theorem}\label{theoremapp1}
Suppose $(E^r,h)\longrightarrow M^n$ is a Bott-Chern nonnegative Hermitian vector bundle. Then all their Chern numbers are nonnegative and bounded below and above by $c_n[E]$ and $c_1^n[E]$:
\be\label{nonnegative1}0\leq c_n[E]\leq c_{\lambda}[E]\leq c_1^n[E],\qquad\forall~\lambda\in\Gamma(n,r).\ee
This particularly holds for globally generated vector bundles. In particular, if the Chern number $c_1^n[E]=0$, then all the Chern numbers vanish.
\end{theorem}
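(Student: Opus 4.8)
The plan is to read off the entire statement as an immediate consequence of Theorem \ref{theoremchernnumber} together with the two facts established in Section \ref{sectionexample}. First I would invoke Example \ref{examplegloballygenerated}: a globally generated holomorphic vector bundle over a compact complex manifold sits in the exact sequence (\ref{globallygenratedsequence}), hence is a holomorphic quotient of a trivial (flat) bundle, and therefore admits a Hermitian metric $h$ with $h\geq_{\mathrm{BC}}0$. Thus the final clause of the theorem (``this particularly holds for globally generated holomorphic vector bundles'') is reduced to the main assertion under the hypothesis $h\geq_{\mathrm{BC}}0$.

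Next, for the main chain of inequalities (\ref{nonnegative1}), I would apply Theorem \ref{theoremchernnumber} directly: under $h\geq_{\mathrm{BC}}0$ we already have, at the level of $(n,n)$-forms,
\[
0\leq c_n(E,h)\leq c_{\lambda}(E,h)\leq [c_1(E,h)]^n,\qquad\forall\,\lambda\in\Gamma(n,r).
\]
Since $M$ is compact and these are real $(n,n)$-forms, integration over $M$ preserves the inequalities (the integral of a nonnegative $(n,n)$-form is a nonnegative real number), and because $c_i(E,h)$ represents the integral Chern class $c_i(E)$, the resulting real numbers are exactly the Chern numbers $c_n[E]$, $c_{\lambda}[E]$, and $c_1^n[E]$. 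This yields (\ref{nonnegative1}).

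Finally, for the ``in particular'' statement, I would argue as follows: if $c_1^n[E]=0$, then for every $\lambda\in\Gamma(n,r)$ the inequality $0\leq c_{\lambda}[E]\leq c_1^n[E]=0$ forces $c_{\lambda}[E]=0$. Since every Chern number of $E$ (i.e. every monomial $c_{1}^{a_1}c_2^{a_2}\cdots$ of total weight $n$ evaluated on $[M]$) is, up to reordering, precisely one of the $c_{\lambda}[E]$ with $\lambda=(\lambda_1,\ldots,\lambda_n)\in\Gamma(n,r)$, all Chern numbers of $E\longrightarrow M$ vanish. I do not anticipate a genuine obstacle here; the only point requiring a word of care is the passage from forms to numbers via integration on a compact (possibly non-Kähler) complex manifold, but this is standard since nonnegativity of a top-degree $(n,n)$-form is a pointwise positivity condition that integrates. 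All the substantive work has already been done in Theorem \ref{theoremschur} and Theorem \ref{theoremchernnumber}.
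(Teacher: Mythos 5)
Your proposal is correct and follows exactly the paper's route: the paper presents Theorem \ref{theoremapp1} as a direct consequence of Theorem \ref{theoremchernnumber} (which already records the integrated inequalities $0\leq c_n[E]\leq c_{\lambda}[E]\leq c_1^n[E]$) combined with Example \ref{examplegloballygenerated} for the globally generated case, and the ``in particular'' clause by squeezing. Your added remarks on integrating nonnegative $(n,n)$-forms over a compact manifold and on identifying arbitrary Chern monomials with partitions in $\Gamma(n,r)$ are accurate and harmless.
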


Compact connected homogeneous complex manifolds have been well-studied and it is also a classical result that their Chern numbers are nonnegative (\cite{Go},\cite{BR},\cite{GR}, \cite{MS}). The nonnegativity of the signed Chern numbers of compact connected complex manifolds holomorphically immersed into complex tori has also been obtained by Matsushima and Stoll (\cite[Thm 6.5]{MS}, \cite[p. 607]{Ma}). %These examples, as we have seen in Examples \ref{examplehomogeneous} and \ref{examplecotangentgloballygenerated}, satisfy the assumptions in Theorem \ref{theoremapp1} and thus we have
The following corollary improves the nonnegativity results for Chern numbers/signed Chern numbers of these manifolds by giving lower and upper bounds respectively.
\begin{corollary}\label{coromain}
\begin{enumerate}
\item
The Chern numbers of any compact connected homogeneous complex manifold $M^n$ are nonnegative and bounded below and above by $c_n[M]$ and $c_1^n[M]$:
\be\label{nonnegative2}0\leq c_n[M]\leq c_{\lambda}[M]\leq c_1^n[M],\qquad\forall~\lambda\in\Gamma(n,n).\ee

\item
The signed Chern numbers of any compact connected complex manifold $M^n$ whose holomorphic cotangent bundle is globally generated are nonnegative and bounded below and above by $(-1)^nc_n[M]$ and $(-1)^nc_1^n[M]$:
\be\label{theoremapp13}
0\leq(-1)^nc_n[M]\leq(-1)^nc_{\lambda}[M]\leq (-1)^nc_1^n[M],\qquad\forall~\lambda\in\Gamma(n,n).\ee
This particularly holds for compact connected complex manifolds holomorphically immersed into complex tori.
In particular, in these two cases, if the Chern number $c_1^n[M]=0$, then all the Chern numbers of $M$ vanish.
\end{enumerate}
\end{corollary}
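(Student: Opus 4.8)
The plan is to deduce both parts directly from Theorem \ref{theoremapp1}, applied to the holomorphic tangent bundle in case (1) and to the holomorphic cotangent bundle in case (2), after recalling from Section \ref{sectionexample} why these bundles carry Bott-Chern nonnegative metrics in the respective situations.

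For part (1), I would first invoke Example \ref{examplehomogeneous}: a compact connected homogeneous complex manifold $M^n$ has globally generated holomorphic tangent bundle $T=T_M$. By the exact sequence (\ref{globallygenratedsequence}) together with Example \ref{examplegloballygenerated}, $T_M$ is a holomorphic quotient of a trivial (hence flat) Hermitian bundle and therefore admits a metric $h$ with $h\geq_{\text{BC}}0$. Since by definition the Chern numbers of $M$ are those of $T_M$, i.e. $c_{\lambda}[M]=c_{\lambda}[T_M]$ and $c_n[M]=c_n[T_M]$, $c_1^n[M]=c_1^n[T_M]$, applying Theorem \ref{theoremapp1} with $E=T_M$ and $r=n$ yields $0\leq c_n[M]\leq c_{\lambda}[M]\leq c_1^n[M]$ for all $\lambda\in\Gamma(n,n)$, which is (\ref{nonnegative2}).

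For part (2), I would run the same argument with $T^{\ast}=T^{\ast}_M$ in place of $T_M$: by Example \ref{examplecotangentgloballygenerated}, the hypothesis that $T^{\ast}_M$ is globally generated (in particular when $M$ is holomorphically immersed into a complex torus) guarantees that $T^{\ast}_M$ carries a metric with $h\geq_{\text{BC}}0$, so Theorem \ref{theoremapp1} gives $0\leq c_n[T^{\ast}_M]\leq c_{\lambda}[T^{\ast}_M]\leq c_1^n[T^{\ast}_M]$ for all $\lambda\in\Gamma(n,n)$. The only remaining point — and the one place a short computation is needed — is to rewrite these in terms of the signed Chern numbers of $M$. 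From $c(T^{\ast}_M)=\sum_i(-1)^ic_i(T_M)$ one has $c_i(T^{\ast}_M)=(-1)^ic_i(M)$, so for $\lambda=(\lambda_1,\ldots,\lambda_n)\in\Gamma(n,n)$,
$$c_{\lambda}[T^{\ast}_M]=(-1)^{\lambda_1+\cdots+\lambda_n}c_{\lambda}[M]=(-1)^nc_{\lambda}[M],$$
since $\sum_j\lambda_j=n$, and likewise $c_n[T^{\ast}_M]=(-1)^nc_n[M]$ and $c_1^n[T^{\ast}_M]=(-1)^nc_1^n[M]$. Substituting these into the inequality from Theorem \ref{theoremapp1} gives (\ref{theoremapp13}).

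The final assertion in both cases is then immediate: if $c_1^n[M]=0$, then in case (1) each $c_{\lambda}[M]$ is squeezed between $0$ and $0$, and in case (2) each $(-1)^nc_{\lambda}[M]$ is, so in either situation all Chern numbers of $M$ vanish. I do not expect a genuine obstacle here, since Theorem \ref{theoremapp1} (ultimately Theorem \ref{theoremchernnumber}) does all the work; the only things to keep straight are citing the correct global-generation statements from Examples \ref{examplehomogeneous} and \ref{examplecotangentgloballygenerated} and tracking the sign $(-1)^i$ when passing from $T_M$ to $T^{\ast}_M$.
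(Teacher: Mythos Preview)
Your proposal is correct and is exactly the argument the paper intends: the sentence preceding the corollary says that these examples ``satisfy the assumptions in Theorem \ref{theoremapp1}'' via Examples \ref{examplehomogeneous} and \ref{examplecotangentgloballygenerated}, so the corollary is stated without further proof. Your added sign computation $c_{\lambda}[T^{\ast}_M]=(-1)^{\sum_j\lambda_j}c_{\lambda}[M]=(-1)^nc_{\lambda}[M]$ is the only detail to supply, and you have it right.
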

%\begin{remark}
%The consideration of compact connected K\"{a}hler manifolds with globally generated holomorphic cotangent bundles can be traced back to Bochner (\cite{Bo}), where he showed the nonnegativity of the signed Euler characteristic.
%\end{remark}

%As mentioned in the Introduction, Demailly, Peternell and Schneider has showed the nonnegativity of the Schur polynomials in the Chern classes for nef holomorphic vector bundles over compact K\"{a}hler manifolds (\cite[Theorem 2.5]{DPS}) and obtained the upper bound $c_1^i$ for any $c_{\lambda}$, $\lambda\in\Gamma(i,r)$. Now by our arguments and remarks in Theorem \ref{theoremchernnumber} their result can be complemented by adding a lower bound as follows.
%\begin{corollary}\label{coroappl1}
%Suppose $E^r\longrightarrow(M,\omega)$ is a nef holomorphic vector bunde over a compact K\"{a}hler manifold with K\"{a}hler form $\omega$. Then we have
%$$0\leq\int_Mc_{i}(E)\wedge[\omega]^{n-i}\leq
%\int_Mc_{\lambda}(E)
%\wedge[\omega]^{n-i}\leq\int_Mc_1^i(E)
%\wedge[\omega]^{n-i},\qquad\forall~\lambda\in\Gamma(i,r).$$In particular, if the Chern number $c_1^n[E]=0$ (resp. $c_n[E]>0$), then all the Chern numbers of $E$ vanish (resp. are positive).\end{corollary}

\section{The second application}\label{sectionconsequence2}
Now we come to our second application. First let us recall some more notation. For a compact connected complex manifold $M^n$, the \emph{algebraic dimension} of $M$, denoted by $a(M)$, is defined to be the maximal number of meromorphic functions on $M$ that can be algebraically independent. In other words, $a(M)$ is the transcendence degree of function field of $M$ over $\mathbb{C}$, which is no more than $\text{dim}_{\mathbb{C}}(M)$. A compact complex manifold $M^n$ is called \emph{Moishezon} if its algebraic dimension attains the maximum: $a(M^n)=n$, in which case it was investigated in detail by Moishezon (\cite{Mo}). We refer the reader to \cite[Ch. 2]{MM} for a detailed presentation of the materials related to Moishezon manifolds.
%A deep theorem of Moishezon states that (cf. \cite[p. 95]{MM})
%\be\label{moishezonresult}\text{\emph{a Moishezon manifold is K\"{a}hler if and only if it is projective algebraic.}}\ee

The starting point of our second application is to recall two fundamental results due to Siu-Demailly and Demailly respectively.
%A major result of Matsushima-Stoll in \cite{MS} says that (\cite[Thms 5,5, 5.6]{MS} or \cite[p. 600]{Ma}) if a globally generated vector bundle $E^r\longrightarrow M^n$ satisfies $0\neq c_{\lambda}(E)\in H^{2i}(M,\mathbb{Z})$ for some $\lambda\in\Gamma(i,r)$, then the algebraic dimension $a(M)\geq i.$ In particular, \emph{if some Chern number of a globally generated vector bundle over $M$ is nonzero, then $M$ is Moishezon.}
%\be\label{matsushimastollresult}
%\begin{split}
%&\text{\emph{if some Chern number of a globally generated holomorphic vector bundle over $M$ }}\\
%&\text{\emph{is nonzero, then $M$ is Moishezon.}}
%\end{split}
%\ee
%This result, together with Moishezon's result that (cf. \cite[p. 95]{MM})
 %\be\label{Moishezontheorem}
%\text{\emph{a Moishezon manifold is K\"{a}hler if and only if it is projective algebraic,}}\ee leads to another main theorem of \cite{MS} (\cite[p. 94, Main Thm]{MS} or \cite[p. 600]{Ma}) saying that \emph{if some Chern number of a globally generated vector bundle over a compact K\"{a}hler manifold is nonzero, then this manifold is projective algebraic}.

The following result is due to Siu and Demailly (cf. \cite[p. 96]{MM}), which solved the Grauert-Riemenschneider conjecture and gives an effective criterion for a compact connected complex manifold to be Moishezon.
\begin{lemma}[Siu-Demailly]\label{siudemaillyresult}
A compact connected complex manifold $M$ is Moishezon provided that there exists a holomorphic line bundle $L$ on $M$ such that $c_1(L)$ is quasi-positive.
\end{lemma}
Here $c_1(L)$ is called \emph{quasi-positive} if there exists a real $(1,1)$-form representing $c_1(L)$ which is nonnegative everywhere and positive somewhere.

Another basic result due to Demailly (\cite[Coro. 1.6]{De} or \cite[Thm 4.1]{DPS}) tells us that
\begin{lemma}[Demailly]\label{demaillyresult}
A Moishezon manifold with nef tangent bundle is projective.
\end{lemma}

%Our strating observation in this second application is to give a new proof of the above-mentioned result due to Matsushima-Stoll by combining our Prop. \ref{theoremschur} and Thm. \ref{theoremapp1} with Siu-Demailly's result (\ref{siudemaillyresult}).\begin{theorem}[Matsushima-Stoll]\label{theoremproj1}Suppose $E^r\longrightarrow M^n$ is a globally generated vector bundle over a compact complex (resp. K\"{a}hler) manifold . If some Chern number of $E\longrightarrow M$ is nonzero, then $M$ is a Moishezon (resp. projective algebraic) manifold.\end{theorem}\begin{proof}By (\ref{globallyimplyBC}) there exists a Hermitian metric $h$ on $E$ such that $h\geq_{BC}0$. Therefore Prop. \ref{theoremschur} says that the first Chern form $c_1(E,h)$ is nonnegative as a real $(1,1)$-form. If by the assumption some Chern number of $M$ is nonzero, then the Chern number$$c_1^n[E]=\int_Mc_1(E,h)^n>0$$ by \ref{nonnegative1}. This means that $c_1(E,h)$ must be positive somewhere and hence the first Chern class $[c_1(E,h)]\in H^{1,1}(M;\mathbb{Z})$ is quasi-positive. So the desired conclusion follows from the Siu-Demailly criterion (\ref{siudemaillyresult}).\end{proof}
Compact connected homogeneous complex manifolds have been well-studied and a well-known fact, which should be due to H.-C. Wang, states that (cf. \cite[Thm 6.2]{MS} or \cite[p. 601]{Ma}) they are projective provided that their \emph{Euler characteristic}, which equal to the top Chern number $c_n$, are nonzero. Our following result is a refinement of this classical fact.
\begin{theorem}\label{theoremproj2}
Suppose $M^n$ is a compact connected homogeneous complex manifold. Then $M$ is a projective manifold if some Chern number of $M$ is nonzero.
\end{theorem}
\begin{remark}
Note that our inequality (\ref{nonnegative2}) indeed implies that the non-vanishing of the Euler characteristic $c_n[M]$ is equivalent to the non-vanishing of \emph{all} the Chern numbers.
\end{remark}
\begin{proof}
If some Chern number $c_{\lambda}[M]\neq0$, then (\ref{nonnegative2}) implies that the Chern number \be\label{nonzero}c_1^n[M]>0.\ee
By our discussions in Section \ref{sectionexample} the holomorphic tangent bundle $T_M$ admits a Hermitian metric $h$ such that $(T_M,h)\geq_{BC}0$ and so Proposition \ref{theoremschur} tells us that the first Chern form $c_1(T_M,h)$ is nonnegative. This, together with (\ref{nonzero}), implies that the first Chern class of $M$, which is also the first Chern class of the anti-canonical line bundle $K_M^{-1}$, is quasi-positive. Thus $M$ is Moishezon via Lemma \ref{siudemaillyresult}. The implication (\ref{bottchernimpliesnefness}) tells us that $T_M$ is nef and so $M$ is projective by Lemma \ref{demaillyresult}.
\end{proof}

%\begin{remark}
%In the statements of Theorems \ref{theoremproj1} and \ref{theoremproj2}, the condition that all the Chern numbers vanish can be restated in the language of cobordism theory that this manifold represents the zero element in cobordism ring (\cite{St}).
%\end{remark}
Note that in Theorem \ref{theoremproj2} the condition of some Chern number be nonzero is \emph{only} sufficient and its converse part is \emph{not} true in general, i.e., even if a compact connected homogeneous complex manifold is projective, it may happen that its all Chern numbers vanish. For instance, the product of an abelian variety over $\mathbb{C}$ and a projective-rational manifold is projective but its Chern numbers are all zero. Indeed, a classical result of Borel and Remmert tells us that (\cite{BR})
%\begin{lemma}[Borel-Remmert]\label{BR}any compact connected homogeneous K\"{a}hler manifold is the product of a complex torus and a projective-rational manifold.\end{lemma}
\be\label{BorelRemmertresult}\begin{split}&\text{\emph{any compact connected homogeneous K\"{a}hler manifold is the product of a complex}}\\&\text{\emph{torus and a projective-rational manifold.}}\end{split}\ee

This result (\ref{BorelRemmertresult}) implies that the condition of some Chern number being nonzero is also necessary in Theorem \ref{theoremproj2} if we further impose \emph{simple-connectedness} on the manifolds in question and thus we have the following result.
\begin{theorem}\label{coroproj2}
Suppose $M^n$ is a simply-connected compact connected homogeneous complex manifold. Then the following four conditions are equivalent:
\begin{enumerate}
\item
The Chern number $c_1^n[M]\neq0$.

\item
Some Chern number of $M$ is nonzero.

\item
$M$ is a projective manifold.

\item
$M$ is a K\"{a}hler manifold.
\end{enumerate}
\end{theorem}
\begin{remark}
\begin{enumerate}
\item
The manifolds considered in Theorem \ref{coroproj2} were called \emph{$C$-spaces} by Wang (\cite{Wa}) and investigated in detail by many authors (\cite{BR}, \cite{Go}, \cite{GR}, \cite{Wa} etc). However, the contents in our Theorem \ref{coroproj2} should be completely new, at least to the author's best knowledge.
%\item
%In the conclusions in Theorems \ref{theoremproj1}, \ref{theoremproj2} and \ref{coroproj2}, when the Chern numbers in consideration are nonzero, they must be positive by (\ref{nonnegative2}) in Corollary \ref{coromain}.
\item
There are homogeneous complex structures on the product of odd-dimensional spheres: $M_{p,q}:=S^{2p+1}\times S^{2q+1}$ \big($(p,q)\neq(0,0)$\big), which are called Calabi-Eckmann manifolds (\cite{CE}) and whose Chern numbers are all zero. $M_{p,q}$ are indeed non-K\"{a}hler as its second Betti number is zero and so these examples match Theorem \ref{coroproj2} very well.
\end{enumerate}
\end{remark}

\section{The third application}\label{sectionconsequence3}
Our third application is to give an in-depth investigation on the structure of compact connected complex manifolds holomorphically immersed into complex tori in detail. As we have seen in Example \ref{examplehomoimmersed}, these manifolds are precisely those compact connected K\"{a}hler manifolds whose holomorphic cotangent bundles are globally generated. The structure of these manifolds was first investigated by Matsushima and his coauthors (\cite{HM}, \cite{Ma1}, \cite{MS}), Yau (\cite[Chapter 3]{Ya}) and Smyth (\cite{Sm}, \cite{NS}) etc around the same time. Later in \cite{Zhang} Zhang related the ampleness of the canonical line bundles of these manifolds to their Todd genera.

Before continuing, let us digress to recall the notion of \emph{Kodaira dimension} $\kappa(M)$  for compact complex manifolds $M$ (\cite[p. 132]{Zh}), which has several equivalent definitions. Here for our later purpose we adopt the following one. Denote by $K_M$ the canonical line bundle of $M$. It turns out that $\text{dim}_{\mathbb{C}}H^0(M, K_M^{\otimes m})$, the complex dimension of the holomorphic sections of $K_M^{\otimes m}$, has the following property: either $H^0(M, K_M^{\otimes m})=0$ for all $m\geq1$ or there exists an integer $0\leq \kappa(M)\leq\text{dim}_{\mathbb{C}}(M)$ and constants $0\leq C_1<C_2$ such that
\be\label{kodairadimension}C_1m^{\kappa(M)}\leq\text{dim}_{\mathbb{C}}H^0(M, K_M^{\otimes m})\leq C_2m^{\kappa(M)},\qquad m>>0,\ee
which means $\text{dim}_{\mathbb{C}}H^0(M, K_M^{\otimes m})$ grows at a rate of $m^{\kappa(M)}$.
In the former case $\kappa(M):=-\infty$. $M$ is called \emph{of general type} if $\kappa(\cdot)$ attains its maximum: $\kappa(M)=\text{dim}_{\mathbb{C}}(M)$.
%It is an elementary fact between the Kodaira dimension $\kappa(M)$ and the algebraic dimension $a(M)$ that (\cite[p. 135]{Zh}):
%\be\label{kodairaalgebraic}\kappa(M)\leq a(M)\leq\text{dim}_{\mathbb{C}}(M).\ee

Denote by $\text{Aut}_0(M)$ the identity component  of the holomorphic automorphism group of a compact complex manifold $M$, which is a connected complex Lie group. The most fundamental structure of compact connected complex manifolds holomorphically immersed into complex tori is the following result.
\begin{theorem}\label{structureresult}{\rm(}\cite[Prop. 1]{Ma1}, \cite[Th. 5]{Ya}, \cite[Th. 1]{Sm}{\rm)}
Suppose $M^n$ is a compact connected complex manifold holomorphically immersed into some complex torus with $\text{Aut}_0(M)\neq\{0\}$. Then $\text{Aut}_0(M)$ is a complex torus and acts freely on $M$. Moreover, the quotient manifold $N:=M/\text{Aut}_0(M)$ is also a compact connected complex manifold and can be holomorphically immersed into some complex torus and $\text{Aut}_0(N)=\{0\}$. Consequently, $M$ is a holomorphic principal complex torus bundle over some compact connected complex manifold $N$ also holomorphically immersed into some complex torus with $\text{Aut}_0(N)=\{0\}.$
\end{theorem}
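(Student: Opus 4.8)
The plan is to exploit the flatness of the ambient torus together with the compactness of $M$ to identify $\mathrm{Aut}_0(M)$, and then to extract the principal bundle structure. Write the given immersion as $f\colon M\to T=V/\Lambda$ with $V\cong\mathbb{C}^{g}$. As in Example~\ref{examplecotangentgloballygenerated}, $M$ is K\"ahler (pull back the flat K\"ahler metric of $T$), and the flat holomorphic $1$-forms on $T$ pull back to holomorphic, hence $d$-closed, $1$-forms on $M$ which generate $T^{\ast}_{M}$ at every point, because $df$ is injective. Set $G:=\mathrm{Aut}_{0}(M)$, a connected complex Lie group with Lie algebra $\mathfrak{g}=H^{0}(M,T_{M})$. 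First I would analyse $\mathfrak{g}$: for $X\in\mathfrak{g}$ the section $df\circ X$ of the \emph{trivial} bundle $f^{\ast}T_{T}\cong M\times V$ is holomorphic, hence a constant $v_{X}\in V$; since $f$ is an immersion, $v_{X}=0$ forces $X\equiv 0$, so \emph{every nonzero holomorphic vector field on $M$ is nowhere vanishing} and $X\mapsto v_{X}$ is injective and $\mathbb{C}$-linear. In a chart where $f$ is an embedding, $v_{X}$ and $v_{Y}$ are constant vector fields tangent to $f(M)$, so $[X,Y]$ corresponds to their (vanishing) bracket; thus $\mathfrak{g}$ is abelian and $G$ is connected abelian, acting on $M$ with discrete isotropy.

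Next I would build a homomorphism to $T$. Since $\iota_{X}(f^{\ast}\alpha)=\langle v_{X},\alpha\rangle$ is constant and $d(f^{\ast}\alpha)=0$, Cartan's formula gives $L_{X}(f^{\ast}\alpha)=0$ for every flat $\alpha$ and every $X\in\mathfrak{g}$; integrating, every $\phi\in G$ fixes all the $1$-forms $f^{\ast}\alpha$. Choosing a lift $\tilde f\colon\widetilde M\to V$ and using the flat coordinates on $V$, this means $\tilde f\circ\tilde\phi-\tilde f$ has vanishing differential, hence is constant; descending, $f\circ\phi=\tau_{c(\phi)}\circ f$ for a unique $c(\phi)\in T$, where $\tau$ denotes translation, and $c\colon G\to T$ is a holomorphic homomorphism with differential $X\mapsto v_{X}$.

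The crux, and the step I expect to be the main obstacle, is that $G$ is \emph{compact}, hence a complex torus. As $G$ is connected abelian, it suffices to rule out closed subgroups isomorphic to $\mathbb{C}^{\ast}$ or to $\mathbb{C}$. A closed $\mathbb{C}^{\ast}$-subgroup would give a holomorphic $\mathbb{C}^{\ast}$-action on the compact K\"ahler manifold $M$, which necessarily has fixed points, i.e. zeros of its generating holomorphic vector field --- impossible by the nowhere-vanishing above. Excluding a closed $\mathbb{C}$-subgroup is the delicate point, and is exactly where the work in the cited references (cf. \cite{Ma1}, \cite{Ya}, \cite{Sm}) is concentrated: one studies the closure of such an orbit, or its image under $c$ in $\mathrm{Alb}(M)$, or invokes the structure theory of automorphism groups of compact K\"ahler manifolds. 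Granting compactness of $G$, for every $\phi\in G$ the fixed locus $\mathrm{Fix}(\phi)$ is a complex submanifold (average a metric over the compact subgroup $\overline{\langle\phi\rangle}$); if $\phi$ fixes a point $x$, then evaluating $f\circ\phi=\tau_{c(\phi)}\circ f$ at $x$ gives $c(\phi)=0$, and differentiating gives $df_{x}\circ d\phi_{x}=df_{x}$, so $d\phi_{x}=\mathrm{id}$ as $df_{x}$ is injective; thus $T_{x}\mathrm{Fix}(\phi)=T_{x}M$, so $\mathrm{Fix}(\phi)=M$ and $\phi=\mathrm{id}$. Hence $G$ acts freely on $M$ and $c$ is injective, so $c$ is a closed embedding onto a subtorus $T_{1}\subseteq T$ with $\mathrm{Lie}(T_{1})=\{v_{X}:X\in\mathfrak{g}\}$.

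Finally I would read off the structure. The action of the compact group $G$ on $M$ being free (and proper), $N:=M/G$ is a compact connected complex manifold and $M\to N$ is a holomorphic principal $G$-bundle. At every $x\in M$ one has $\mathrm{Lie}(T_{1})=df_{x}(\mathfrak{g}\cdot x)\subseteq df_{x}(T_{x}M)$, where $\mathfrak{g}\cdot x\subseteq T_{x}M$ is the tangent space to the $G$-orbit through $x$, so the kernel of $d(\pi_{1}\circ f)_{x}$, with $\pi_{1}\colon T\to T/T_{1}$, equals $\mathfrak{g}\cdot x$; therefore the $G$-invariant holomorphic map $\pi_{1}\circ f$ descends to a holomorphic immersion $\overline f\colon N\to T/T_{1}$, exhibiting $N$ as a compact connected complex manifold holomorphically immersed into a complex torus. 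Lastly, the quotient map $M\to N$ induces the \emph{trivial} homomorphism $\mathrm{Aut}_{0}(M)\to\mathrm{Aut}(N)$ (we have divided out all of $\mathrm{Aut}_{0}(M)$); on the other hand, given $\psi\in\mathrm{Aut}_{0}(N)$, lifting a path from $\mathrm{id}_{N}$ to $\psi$ to a path of principal-bundle automorphisms of $M\to N$ starting at $\mathrm{id}_{M}$ produces a $G$-equivariant automorphism of $M$ lying in $\mathrm{Aut}_{0}(M)=G$ and covering $\psi$; since that automorphism induces the identity on $N$, we get $\psi=\mathrm{id}$. Hence $\mathrm{Aut}_{0}(N)=\{0\}$, and by construction $M$ is a holomorphic principal complex torus bundle over such an $N$, which completes the proof.
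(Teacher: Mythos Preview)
The paper does not supply a proof of Theorem~\ref{structureresult}; it is quoted from \cite{Ma1}, \cite{Ya}, \cite{Sm} and used as a black box in the proof of Theorem~\ref{theoremappl3}. So there is no in-paper argument to compare against, and your proposal is an attempted reconstruction of the cited result rather than of anything in this article.

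That said, two steps in your outline are genuinely incomplete. The first you already flag: compactness of $G=\mathrm{Aut}_0(M)$. Your exclusion of a closed $\mathbb{C}^\ast$ via fixed points is fine, but ``ruling out a closed $\mathbb{C}$'' is not a throwaway; the standard route is the Lieberman--Fujiki structure theorem for $\mathrm{Aut}_0$ of a compact K\"ahler manifold (the natural map to $\mathrm{Alb}(M)$ has linear algebraic, hence here finite, kernel because every holomorphic vector field is zero-free, and closed image), not an ad hoc orbit-closure argument. The second gap is in your proof that $\mathrm{Aut}_0(N)=\{0\}$: you assert that a path $\mathrm{id}_N\rightsquigarrow\psi$ in $\mathrm{Aut}_0(N)$ lifts to a path of \emph{holomorphic} principal-bundle automorphisms of $M\to N$, but this needs $\psi^\ast M\cong M$ as holomorphic principal $G$-bundles, which is not automatic for torus bundles (the moduli live in a quotient of $H^1(N,\mathcal{O}_N)^{\dim G}$, on which $\mathrm{Aut}_0(N)$ need not act trivially a priori). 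The clean fix is to lift \emph{vector fields} rather than automorphisms: if $0\neq Y\in H^0(N,T_N)$ then, by your own analysis applied to the immersion $\bar f\colon N\to T/T_1$, $d\bar f\circ Y$ is a nonzero constant $\bar w\in V/\mathrm{Lie}(T_1)$; pick any lift $w\in V$, and since $df_x(T_xM)\supseteq\mathrm{Lie}(T_1)$ with image $d\bar f_{\pi(x)}(T_{\pi(x)}N)\ni\bar w$ in $V/\mathrm{Lie}(T_1)$, one has $w\in df_x(T_xM)$ for every $x$, giving $X:=(df)^{-1}(w)\in H^0(M,T_M)=\mathfrak g$ with $d\pi(X)=Y$; but $d\pi$ kills $\mathfrak g$, so $Y=0$.
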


This structure theorem has several applications in \cite{Ma1}, \cite{Ya} and \cite{Sm}. Let us record several of them and some results in \cite{Zhang} related to our next result in the following theorem.
\begin{theorem}\label{structureresult2}
Suppose $M^n$ is a compact connected complex manifold holomorphically immersed into some complex torus. Then
\begin{enumerate}
\item
\rm(\cite[p. 238]{Ya}\rm) $M$ is a (possibly trivial) torus bundle over a compact K\"{a}hler manifold of general type.
\item
(\cite[p. 278]{Sm}) The following three conditions are equivalent:
\be\label{Sm}\text{Aut}_0(M)=\{0\}
\Longleftrightarrow c_1^n[M]\neq0\Longleftrightarrow c_n[M]\neq0.\ee

\item
(\cite[Thm 3, Coro. 3]{Zhang}) The following three conditions are equivalent:
\be\label{Zhang}\text{$K_M$ is ample}
\Longleftrightarrow (-1)^n\text{td}(M)>0\Longleftrightarrow (-1)^nc_n[M]>0,\ee
where $\text{td}(M)$ is the Todd genus of $M$, which by definition is the alternating sum of the Hodge numbers $$\text{td}(M)=\sum_{q}(-1)^qh^{0,q}(M)$$
and the Hirzebruch-Riemann-Roch theorem says that it is a ratinally linear combination of Chern numbers.
\end{enumerate}
\end{theorem}

We can now give our third application, which relates
$\text{Aut}_0(M)$, all the Chern numbers of $M$, the Kodaira dimension $\kappa(M)$, the canonical line bundle $K_M$, and its torus bundle structure and particularly improves various results summarized in Theorem \ref{structureresult2}.
\begin{theorem}\label{theoremappl3}
Suppose $M^n$ is a compact connected complex manifold holomorphically immersed into some complex torus, or equivalently, $M^n$ is a compact connected K\"{a}hler manifold with globally generated holomorphic cotangent bundle. Then the following six conditions are equivalent:
\begin{enumerate}
\item
$\text{Aut}_0(M)=\{0\}$,
\item
all the signed Chern numbers $(-1)^nc_{\lambda}[M]$ are strictly positive,
\item
$M$ is of general type,
\item
the canonical line bundle $K_M$ is ample,
\item
the signed Todd genus is positive: $(-1)^n\text{td}(M)>0$,
\item
$M$ cannot be realized as a total space of some nontrivial torus bundle;\\

and the following six conditions are equivalent:\\

\item
$\text{Aut}_0(M)\neq\{0\}$,
\item
all the Chern numbers of $M$ vanish,
\item
the Kodaira dimension $\kappa(M)<n$,
\item
the canonical line bundle $K_M$ is not ample,
\item
the Todd genus $\text{td}(M)$ vanishes,
\item
$M$ is a nontrivial holomorphic principal complex torus bundle over a compact connected K\"{a}hler manifold with ample canonical line bundle.
\end{enumerate}
Moreover, under {\rm(}any one of{\rm)} the first six equivalent conditions, $M$ is projective.
\end{theorem}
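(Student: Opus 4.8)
The plan is to split the eight conditions into two complementary quadruples, $\{(1),(2),(3),(4)\}$ and $\{(5),(6),(7),(8)\}$, and to prove the equivalences within each by short implications passing through $(1)$ (resp. $(5)$); the only genuinely new ingredient is the combination of Smyth's criterion, Corollary \ref{structureresult2}(2), with the lower bound $c_n[M]\le c_\lambda[M]$ furnished by Corollary \ref{coromain}(2), everything else being a reorganization of the structure theory and of Moishezon's theorem. Throughout, $M$ is compact K\"ahler since it is holomorphically immersed in a complex torus (Example \ref{examplecotangentgloballygenerated}), $n=\dim_{\mathbb C}M$, and I use that $c_n[M]$ is the topological Euler characteristic, hence multiplicative in holomorphic fibre bundles and zero on positive-dimensional complex tori. \emph{Step $1$} ($(1)\Leftrightarrow(2)$, $(5)\Leftrightarrow(6)$): by Corollary \ref{structureresult2}(2), $\text{Aut}_0(M)=\{0\}$ is equivalent to $c_n[M]\neq0$ and to $c_1^n[M]\neq0$, while Corollary \ref{coromain}(2) gives $0\le(-1)^nc_n[M]\le(-1)^nc_\lambda[M]\le(-1)^nc_1^n[M]$ for all $\lambda\in\Gamma(n,n)$. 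Hence the dichotomy: either $c_n[M]=c_1^n[M]=0$, in which case all signed Chern numbers are pinched between $0$ and $0$ and vanish, or $(-1)^nc_n[M]>0$, in which case $(-1)^nc_\lambda[M]\ge(-1)^nc_n[M]>0$ for every $\lambda$. This proves $(1)\Leftrightarrow(2)$ and its negation $(5)\Leftrightarrow(6)$, and records the (non-formal) fact that $(2)$ and $(6)$ are mutually exclusive and exhaustive.

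\emph{Step $2$: $(1)\Leftrightarrow(3)$, hence $(5)\Leftrightarrow(7)$, together with the ``moreover'' clause.} For $(3)\Rightarrow(1)$: if $\kappa(M)=n$ then $a(M)=n$ by (\ref{kodairaalgebraic}), so $M$ is Moishezon, hence projective algebraic by (\ref{moishezonresult}) as it is K\"ahler; a compact K\"ahler (indeed projective) manifold of general type has finite holomorphic automorphism group, so $\text{Aut}_0(M)=\{0\}$. Since all of $(1)$--$(4)$ imply $(3)$, this argument also gives the ``moreover'' clause. For $(1)\Rightarrow(3)$: by Corollary \ref{structureresult2}(1), $M$ is a (possibly trivial) torus bundle over a compact K\"ahler manifold $B$ of general type; were the fibre torus positive-dimensional, multiplicativity of the Euler characteristic would force $c_n[M]=0$, contradicting $c_n[M]\neq0$ (which holds under $(1)$ by Corollary \ref{structureresult2}(2)), so $M=B$ is of general type. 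As $(3)$ and $(7)$ are complementary ($\kappa(M)\le n$ always), $(5)\Leftrightarrow(7)$ follows by negation.

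\emph{Step $3$: $(5)\Leftrightarrow(8)$ and $(1)\Leftrightarrow(4)$.} If $(5)$ holds, Theorem \ref{structureresult} presents $M$ as a holomorphic principal complex torus bundle with positive-dimensional fibre $\text{Aut}_0(M)$ over a compact connected manifold $N$ holomorphically immersed in a torus with $\text{Aut}_0(N)=\{0\}$; applying the $(1)\Rightarrow(3)$ part of Step $2$ to $N$ shows $N$ is of general type, and $N$ is K\"ahler, so $(8)$ holds. Conversely $(8)\Rightarrow(5)$ because the structure group, a positive-dimensional complex torus, acts freely and holomorphically on $M$. Finally $(1)\Leftrightarrow(4)$: $(5)\Rightarrow\neg(4)$ is contained in Theorem \ref{structureresult} (equivalently in $(8)$), and if $\neg(4)$ holds then $M$ fibres over a base of strictly smaller dimension with positive-dimensional complex torus fibres, whence $c_n[M]=0$ and $\text{Aut}_0(M)\neq\{0\}$ by Corollary \ref{structureresult2}(2). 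The two chains $(1)\Leftrightarrow(2)\Leftrightarrow(3)\Leftrightarrow(4)$ and $(5)\Leftrightarrow(6)\Leftrightarrow(7)\Leftrightarrow(8)$ are then complete.

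\emph{Main obstacle.} No step hides a hard estimate; the delicate part is the bookkeeping. One must check that the two quadruples really are complementary — especially $(2)\Leftrightarrow\neg(6)$ and $(4)\Leftrightarrow\neg(8)$, which are not formal negations and rely on the dichotomies produced in Steps $1$ and $3$ — and that applying $(1)\Rightarrow(3)$ to the base $N$ in Step $3$ introduces no circularity; it does not, because for $N$ that implication re-invokes only Corollary \ref{structureresult2} and the Euler-characteristic computation, never Theorem \ref{structureresult} applied to $N$ itself. In the end the sole place where the paper's new results enter is the lower bound of Corollary \ref{coromain}(2) inside Step $1$.
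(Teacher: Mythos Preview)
Your proof is correct, and the overall scaffolding (Step~1, Step~3, the ``moreover'' clause, and the reduction of the second quadruple to negations of the first) agrees with the paper's. The genuine divergence is in the equivalence $(1)\Leftrightarrow(3)$. The paper argues $(2)\Rightarrow(3)$ by combining the Kodaira--Kawamata--Viehweg vanishing theorem with Hirzebruch--Riemann--Roch to compute $\dim_{\mathbb C}H^0(M,K_M^{\otimes m})$ asymptotically, and argues $(3)\Rightarrow(2)$ by contradiction via a result of Howard--Matsushima (if $c_n[M]=0$ then $M$ carries a nowhere-vanishing holomorphic $1$-form) together with the Popa--Schnell theorem (every holomorphic $1$-form on a projective manifold of general type has a zero). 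Your route is shorter and more elementary: for $(1)\Rightarrow(3)$ you feed Yau's fibration statement (Corollary~\ref{structureresult2}(1)) back into the Euler-characteristic multiplicativity, and for $(3)\Rightarrow(1)$ you invoke the classical finiteness of $\mathrm{Aut}$ for projective varieties of general type, a fact the paper does not cite. What your approach buys is the complete avoidance of KKV vanishing, Riemann--Roch, and Popa--Schnell; what the paper's approach buys is that $(2)\Rightarrow(3)$ does not rest on Yau's structure theorem, and that $(3)\Rightarrow(2)$ exhibits a concrete application of the recent Popa--Schnell result. Your variant of $(8)\Rightarrow(5)$ (the structure torus acts holomorphically and freely on $M$) is also slightly different from the paper's, which instead goes through $c_n[M]=0$ and Smyth's criterion again.
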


A direct corollary of Theorem \ref{theoremappl3} is the following result, which tells us that whether or not the Chern numbers and Todd genus vanish is simultaneous.
\begin{corollary}\label{coroapp3}
Suppose $M^n$ is a compact connected complex manifold holomorphically immersed into some complex torus. Then%, or equivalently, $M^n$ is a compact connected K\"{a}hler manifold with globally generated holomorphic cotangent bundle.
\begin{enumerate}
\item
if the Todd genus $\text{td}(M)=0$, then all the Chern numbers vanish;

\item
if the Todd genus $\text{td}(M)\neq0$, then it has sign $(-1)^n$ and all the Chern numbers are nonzero and have the same sign $(-1)^n$.
\end{enumerate}
In particular, all Chern numbers either vanish or are nonzero and have the sign $(-1)^n$.
\end{corollary}

\emph{Proof of Theorem \ref{theoremappl3}.}
\begin{proof}
First we notice that the condition $(4)$ implies that $M$ is projective by the Kodaira embedding theorem, from which the last conclusion in Theorem \ref{theoremappl3} follows.

``$(2)\Rightarrow(6)$": If $M$ can be realized as a total space of some nontrival torus bundle, then $c_n[M]=0$ as for fiber bundles the Euler characteristic is multiplicative.

``$(6)\Rightarrow(1)$": This follows from Theorem \ref{structureresult}.

``$(1)\Leftrightarrow(2)$": This follows from (\ref{theoremapp13}) and (\ref{Sm}).

``$(2)\Rightarrow(3)$": The proof is a combination of a Kodaira-type vanishing theorem and the Hirzebruch-Riemann-Roch theorem.

By Example \ref{examplehomoimmersed} we can choose a Hermitian metric $h$ on $T_M^{\ast}$ such that $(h,T_M^{\ast})\geq_{\text{BC}}0$. Then (\ref{theoremschurspecialcase}) tells us that $c_1(T_M^{\ast},h)\geq 0$. This means that the Chern class
\be\label{vanishing1}c_1(K_M^{\otimes m})=mc_1(K_M)=m[c_1(T_M^{\ast},h)]\geq0,\qquad m\geq 1.\ee
Condition $(2)$ implies that
\be\label{vanishing2}\int_M\big(c_1(K_M^{\otimes m})\big)^n=m^n(-1)^nc_1^n[M]>0,\qquad m\geq 1.\ee

We claim that under the condition $(2)$ $M$ is projective. Indeed, as in the proof of Theorem \ref{theoremproj2}, $c_1(T_M^{\ast},h)\geq 0$ and $(-1)^nc_1^n[M]>0$ imply that $c_1(K_M)$ is quasi-positive and thus $M$ is Moishezon by Lemma \ref{siudemaillyresult}. A deep result of Moishezon himself tells us that a Moishezon K\"{a}hler manifold is projective (cf. \cite[p. 95]{MM}).

Then a combination of (\ref{vanishing1}), (\ref{vanishing2}), $M$ being projective and the Kodaira-Kawamata-Viehweg vanishing theorem for projective manifolds (\cite[p. 74, (3.10)]{Ko}) yields
\be\label{vanishing3}
H^q(M,K_M^{\otimes m})=0,\qquad q\geq1,~m>>0.\ee

Therefore the Hirzebruch-Riemann-Roch theorem tells us that (\cite{Hi})
\be\begin{split}
\text{dim}_{\mathbb{C}}H^0(M,K_M^{\otimes m})
=&\sum_{q=0}^n(-1)^q\text{dim}_{\mathbb{C}}H^q(M,K_M^{\otimes m})\qquad\big((\ref{vanishing3})\big)\\
=&\int_M\big[\text{Td}(M)\cdot\text{Ch}(K_M^{\otimes m})\big]\qquad(\text{Td: Todd class, Ch: Chern character})\\
=&\int_M\Big\{\big[1+\frac{1}{2}c_1(M)+\cdots\big]
\cdot\exp\big[-mc_1(M)\big]\Big\}\\
=&\frac{(-1)^nc_1^n[M]}{n!}m^n+O(m^{n-1}),\qquad(m>>0)
\end{split}\nonumber\ee
which, together with $(-1)^nc_1^n[M]>0$ under the condition $(2)$, implies (\ref{kodairadimension}) with the Kodaira dimension $\kappa(M)=n$, i.e., $M$ is of general type.

``$(3)\Rightarrow(4)$": %We first assert that $M$ is projective algebraic under the condition $(3)$. Indeed, $\kappa(M)=n$ implies that the algebraic dimension $a(M)=n$ by (\ref{kodairaalgebraic}) and so $M$ is Moishezon and thus projective algebraic.
This follows from a result of Ran (\cite[p. 176, Coro. 3]{Ra}), which says that a compact connected complex manifold of general type holomorphically immersed into some complex torus must have ample canonical line bundle.

``$(4)\Leftrightarrow(5)$": This follows from (\ref{Zhang}).

``$(4)\Rightarrow(2)$":
Note that the ampleness of $K_M$ implies that the signed Chern number $(-1)^nc_1^n[M]>0$, which, together with (\ref{Sm}), implies that $(-1)^nc_n[M]>0$. Then $(2)$ follows from our (\ref{theoremapp13}).
%is projective algebraic. If on the contrary the Euler characteristic $c_n[M]=0$, then a result of Howard and Matsushima (\cite[Th. 6]{HM} or \cite[p. 655]{Ma}) says that $M$ admits a holomorphic one-form whose zero-locus is empty. However, another major result of Zhang in \cite[Thm 1]{Zhang} tells us that the zero-locus of any holomorphic one-form on a projective algebraic manifold with ample canonical line bundle is non-empty, which leads to a contradiction. This means that $c_n[M]\neq0$ and thus proves condition $(2)$ by (\ref{theoremapp13}).

``$(8)\Rightarrow(11)$": Note that the Todd genus $\text{td}(M)$ is a rationally linear combination of Chern numbers.

 ``$(11)\Rightarrow(7)$": This follows from ``$(1)\Leftrightarrow(5)$".

 ``$(7)\Leftrightarrow(8)$": This follows from (\ref{theoremapp13}) and (\ref{Sm}).

``$(7)\Rightarrow(12)$": Theorem \ref{structureresult} tells us that, under the condition $\text{Aut}_0(M)\neq\{0\}$, $M$ is a nontrivial holomorphic principal complex torus bundle over a compact connected complex manifold $N$, where $N$ can also be holomrophically immersed some complex torus with $\text{Aut}_0(N)=\{0\}$. Then ``$(1)\Leftrightarrow(4)$" implies that $K_N$ is ample.

``$(12)\Rightarrow(7)$": Condition $(12)$ implies that the Euler characteristic $c_n[M]=0$ and thus $\text{Aut}_0(M)\neq\{0\}$ follows from ``$ (1)\Leftrightarrow(2)$".

Other equivalent relations among $(7)$ through $(12)$ are direct via those among $(1)$ through $(6)$.
\end{proof}

%\begin{remark}
%A recent result of Popa and Schnell (\cite{PS}) refined \cite[Thm 1]{Zhang} by showing that the zero-locus of any holomorphic one-form on a projective algebraic manifold of general type is non-empty. Thus we can also apply it to deduce ``$(3)\Rightarrow(2)$".
%We can easily see from Theorem \ref{structureresult} and the above proof that the condition $(8)$ in Theorem \ref{theoremappl3} can be strengthened to
%\be\label{strong}\begin{split}``&\emph{\text{$M$ is a nontrivial holomorphic principal complex torus bundle over some compact}} \\&\emph{\text{connected K\"{a}hler manifold $N$ with globally generated $T^{\ast}_N$ and trivial $\text{Aut}_0(N)$}}''\end{split}\eeor weakened to\be\label{weak}``\emph{$M$ can be realized as a total space of some nontrivial torus bundle",}\eeor can be stated as some condition, as we have done in Theorem \ref{theoremappl3}, provided that $$``(\ref{strong})\Longrightarrow\text{this condition}\Longrightarrow(\ref{weak})".$$The reason for our current choice of condition $(8)$ in Theorem \ref{theoremappl3} is to match Yau's result in Corollary \ref{structureresult2}. The condition $(4)$ in Theorem \ref{theoremappl3} can be similarly treated.
%\end{remark}

\section{Proof of Proposition \ref{theoremschur}}\label{sectionproofoftheorem1}
In this last section we shall prove Proposition \ref{theoremschur} and complete this article.

Let us begin by recalling some well-known facts about $GL_r(\mathbb{C})$-invariant polynomial functions. In what follows $GL_r(\mathbb{C})$ and $M_r(\mathbb{C})$ denote respectively the general linear group of order $r$ and the $r\times r$ matrix group, both over $\mathbb{C}$. A map $f: M_r(\mathbb{C})\rightarrow\mathbb{C}$ is called a \emph{$GL_r(\mathbb{C})$-invariant polynomial function of homogeneous degree $i$} if $f$ can be written as
\be\label{polynomialfunction1}
f\big(A=(T_{\alpha\beta})\big)
=\sum_{1\leq\alpha_j,\beta_j\leq r}p_{\alpha_1\cdots\alpha_i,
\beta_1\cdots\beta_i}
T_{\alpha_1\beta_1}\cdots T_{\alpha_i\beta_i}\ee
for $p_{\alpha_1\cdots\alpha_i,
\beta_1\cdots\beta_i}\in\mathbb{C}$ and satisfies
\be\label{polynomialfunction2}
f(BAB^{-1})=f(A),\qquad\forall~A\in M_r(\mathbb{C}),~\forall~B\in GL_r(\mathbb{C}).\ee

Denote by $I_i$ the complex linear space consisting of all $GL_r(\mathbb{C})$-invariant polynomial function of homogeneous degree $i$:
\be\label{Ii} I_i:=\{f:~M_r(\mathbb{C})\rightarrow\mathbb{C}~|~\text{$f$
satisfy (\ref{polynomialfunction1}) and (\ref{polynomialfunction2})}
\}\nonumber\ee
and the graded ring
$$I:=\bigoplus_{i\geq 0}I_i.$$

If we set
\be\det(tI_r+A)=:\sum_{i=0}^rc_i(A)\cdot t^{r-i},\nonumber\ee
then these $c_i(\cdot)$ are $GL_r(\mathbb{C})$-invariant polynomial functions of homogeneous degree $i$. In particular, $c_0(A)=1,$ $c_1(A)=\text{trace}(A)$ and $c_n(A)=\det(A)$. It is well-known that the graded ring $I$ is multiplicatively generated by $c_1,\ldots,c_r$, i.e., $$I=\mathbb{C}[c_1,\cdots,c_r].$$

Now if $(E^r,h)\longrightarrow M$ is a Hermitian holomorphic vector bundle and $(\Omega^i_j)$ is the curvature matrix of its Chern connection, then \big(recall (\ref{curvaturematrixtransformation}) and (\ref{det})\big)
$$c_i\big(\frac{\sqrt{-1}}{2\pi}(\Omega^i_j)\big)
=:c_i(E,h),\qquad(0\leq i\leq r)$$ are globally defined, closed, real-valued $(i,i)$-forms over $M$ and represent the $i$-th Chern classes of $E\longrightarrow M$.

Thus we have a natural graded ring homomorphism $\varphi$ sending $c_i$ to $c_i(E,h)$:
\be\label{naturalmap}
\begin{split}
 I=\bigoplus_{i\geq 0}I_i=\mathbb{C}[c_1,\cdots,c_r]&\xlongrightarrow{\varphi}\mathbb{C}[c_1(E,h),
\cdots,c_r(E,h)]\\
f&\longmapsto f\big(\frac{\sqrt{-1}}{2\pi}(\Omega^i_j)\big). \end{split}\ee

Griffiths first showed that (\cite[p. 242, (5.6)]{Gr}) any $f\in I_i$ can be written, which may not be unique, in the form
\be\label{griffithsexp}
\begin{split}
&f(A=\big(T_{ij})\big)\\
=&\sum_{\pi,\tau\in S_i,\rho=(\rho_1,\ldots,\rho_i)\in[1,r]^i}
p_{\rho,\pi,\tau}T_{\rho_{\pi(1)}\rho_{\tau(1)}}\cdots
T_{\rho_{\pi(i)}\rho_{\tau(i)}},\qquad p_{\rho,\pi,\tau}\in\mathbb{C},\end{split}\ee
where $S_i$ denotes the permutation group on $i$ objects.

The following definition was introduced by Griffiths (\cite[p. 242, (5.9)]{Gr}).
\begin{definition}[Griffiths]
A polynomial function $f\in I_i$ is called \emph{Griffiths positive} if it can be expressed in the form (\ref{griffithsexp}) with
\be\label{griffithsexp2}p_{\rho,\pi,\tau}=\sum_{j\in I}\lambda_{\rho,j}q_{\rho,j,\pi}\bar{q}_{\rho,j,\tau},\qquad\forall~\rho, \pi, \tau,\ee
for some real numbers $\lambda_{\rho,j}\geq 0$, some complex numbers $q_{\rho,j,\pi}$ and some finite set $J$.
\end{definition}

Denote by
$$\Pi_i:=\{f\in I_i~|~\text{$f$ are Griffiths positive}\}.$$
%It is clear from the definition that $\Pi_i$ is a convex cone and $\Pi_i\Pi_j\subset\Pi_{i+j}$.
%$$\Pi:=\oplus_{i\geq 0}\Pi_i$$
 %is called \emph{Griffiths positive cone} and graded by degree.

The following key fact relating Griffiths positive polynomials and Schur polynomials was observed in \cite[p. 54, Prop. A.3]{FL}, whose proof is built on the representation theory and a previously related result in \cite{UT}.
\begin{proposition}[Fulton-Lazarsfeld]
\be\label{FLprop}\Big\{\sum_{\lambda\in\Gamma(i,r)}
a_{\lambda}S_{\lambda}(c_1,\dots,c_r)~\big|~\text{all $a_{\lambda}\geq 0$}\Big\}=\Pi_i.\ee
\end{proposition}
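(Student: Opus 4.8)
The plan is to first record that $\Pi_i$ is a convex cone in $I_i$: it is stable under multiplication by nonnegative scalars (simply rescale the numbers $\lambda_{\rho,j}$ in (\ref{griffithsexp2})), and it is stable under addition, because if $f_1,f_2\in\Pi_i$ are exhibited as Griffiths positive by data indexed over finite sets $J_1$ and $J_2$, then the concatenated data over $J_1\sqcup J_2$ exhibits $f_1+f_2$ as Griffiths positive. Hence it suffices to prove that each single Schur polynomial $S_{\lambda}(c_1,\dots,c_r)$, $\lambda\in\Gamma(i,r)$, already belongs to $\Pi_i$. I would do this by an explicit computation in the group algebra of the symmetric group $S_i$; the representation-theoretic input is, as in the original argument of Fulton and Lazarsfeld, essentially Schur--Weyl duality.

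The first step is to rewrite $S_{\lambda}$, regarded via $\varphi$ as a polynomial function on $M_r(\mathbb{C})$, in terms of the basic invariants $p_{\sigma}(A):=\sum_{\rho\in[1,r]^i}T_{\rho_1\rho_{\sigma(1)}}\cdots T_{\rho_i\rho_{\sigma(i)}}$ attached to $\sigma\in S_i$. One checks directly that $p_{\sigma}(A)=\prod_{c}\text{tr}(A^{\ell(c)})$, the product running over the cycles $c$ of $\sigma$ of lengths $\ell(c)$, so that $p_{\sigma}$ depends only on the cycle type of $\sigma$ and coincides with the power-sum symmetric function of the eigenvalues of $A$ of that cycle type. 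Combining this with the Frobenius character formula $s_{\lambda}=\sum_{\mu}z_{\mu}^{-1}\chi_{\lambda}(\mu)\,p_{\mu}$ (and the fact that there are $i!/z_{\mu}$ permutations of cycle type $\mu$) yields the clean identity $S_{\lambda}\big(c_1(A),\dots,c_r(A)\big)=\frac{1}{i!}\sum_{\sigma\in S_i}\chi_{\lambda}(\sigma)\,p_{\sigma}(A)$, where $\chi_{\lambda}$ is the irreducible character of $S_i$ indexed by $\lambda$.

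The second step is to fit this into Griffiths' normal form (\ref{griffithsexp}). Reindexing the dummy variable $\rho\mapsto\rho\circ\pi$ in a generic term of (\ref{griffithsexp}) turns $\sum_{\rho}T_{\rho_{\pi(1)}\rho_{\tau(1)}}\cdots T_{\rho_{\pi(i)}\rho_{\tau(i)}}$ into $p_{\pi^{-1}\tau}(A)$, so taking the coefficients $p_{\rho,\pi,\tau}:=\frac{1}{(i!)^2}\chi_{\lambda}(\pi^{-1}\tau)$ — independent of $\rho$ — and using that each permutation arises as $\pi^{-1}\tau$ for exactly $i!$ pairs $(\pi,\tau)$, one recovers precisely $S_{\lambda}(c_1,\dots,c_r)$. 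It then remains to verify condition (\ref{griffithsexp2}), i.e.\ that for each $\rho$ the $S_i\times S_i$ matrix $\big(\chi_{\lambda}(\pi^{-1}\tau)\big)_{\pi,\tau}$ is Hermitian positive semidefinite. Choosing a unitary realization $R_{\lambda}$ of the irreducible representation attached to $\lambda$, one has $\chi_{\lambda}(\pi^{-1}\tau)=\text{tr}\big(R_{\lambda}(\pi)^{\ast}R_{\lambda}(\tau)\big)=\langle R_{\lambda}(\pi),R_{\lambda}(\tau)\rangle_{\text{HS}}$, a Gram matrix, hence positive semidefinite; its spectral decomposition supplies exactly the nonnegative $\lambda_{\rho,j}$ and the complex numbers $q_{\rho,j,\pi}$ required in (\ref{griffithsexp2}). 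This proves $S_{\lambda}(c_1,\dots,c_r)\in\Pi_i$, and with the cone property it gives (\ref{FLprop}).

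I expect the main difficulty to be purely organizational: keeping the several layers of indices in (\ref{griffithsexp})--(\ref{griffithsexp2}) straight — the tuple $\rho$, the two permutations $\pi,\tau$, and the auxiliary index $j$ — and confirming that ``the coefficient function $\sigma\mapsto\chi_{\lambda}(\sigma)$ is positive-definite on $S_i$'' translates \emph{exactly} into Griffiths' positivity, using the harmless reduction that $p_{\sigma}$ depends only on the cycle type (so the coefficients may be symmetrized into a class function before invoking the Frobenius formula). A secondary point to handle with care is that the non-uniqueness of the expansion (\ref{griffithsexp}) must not obstruct additivity of $\Pi_i$, which is why the merging of index sets is spelled out at the outset. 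Finally, I would note that the original proof of Fulton and Lazarsfeld packages all of this on the $GL_r(\mathbb{C})$ side, via Schur functors and the Cauchy/Littlewood--Richardson formalism; the symmetric-group computation above is an equivalent, more self-contained route to the same conclusion.
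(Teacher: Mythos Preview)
The paper does not supply its own proof of this proposition; it merely records the statement and cites \cite[p.~54, Prop.~A.3]{FL}, noting that the argument there ``is built purely on the representation theory.'' So there is nothing in the paper to compare your proposal against beyond that one-line attribution.

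Your argument is correct and self-contained. One small point of bookkeeping: with the paper's convention $S_{\lambda}(c_1,\dots,c_r)=\det(c_{\lambda_j-j+k})$ and $c_i=e_i$ the $i$-th elementary symmetric function of the eigenvalues, the dual Jacobi--Trudi identity gives $S_{\lambda}(c_1(A),\dots,c_r(A))=s_{\lambda'}(\text{eigenvalues of }A)$, so the Frobenius formula produces $\chi_{\lambda'}$ rather than $\chi_{\lambda}$ in your identity. This is immaterial for the proof, since $\chi_{\lambda'}$ is again an irreducible character (indeed $\chi_{\lambda'}=\mathrm{sgn}\cdot\chi_{\lambda}$) and the Gram-matrix / positive-semidefiniteness step goes through verbatim.

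As you yourself note, Fulton and Lazarsfeld run the same representation theory on the $GL_r(\mathbb{C})$ side, using Schur--Weyl duality and the decomposition of $(\mathbb{C}^r)^{\otimes i}$ into Schur functors; your version transports everything to $S_i$ via the Frobenius formula and the elementary fact that $\sigma\mapsto\chi(\sigma)$ is positive-definite for any unitary character $\chi$. The two routes are equivalent; yours has the advantage of making the Griffiths coefficients $p_{\rho,\pi,\tau}=\tfrac{1}{(i!)^2}\chi_{\lambda'}(\pi^{-1}\tau)$ completely explicit, at the cost of importing the Frobenius character formula as a black box.
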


Now we are in the position to prove Proposition \ref{theoremschur}.
\begin{proof}
Assume that the Hermitian holomorphic vector bundle $(E^r,h)\longrightarrow M^n$ satisfies $h\geq_{\text{BC}}0$, i.e., under the local coordinates $(z^1,\ldots,z^n)$, the curvature matrix $(\Omega^i_j)$ with respect to some unitary frame field can be written in the following form
\be(\Omega^i_j)=A\wedge\overline{A^t},\qquad A=(\sum_pT^{(p)}_{ij}\text{d}z^p).\nonumber\ee
This implies that
\be\label{curvatureexpr}\Omega^i_j=\sum_{k,p,q}T^{(p)}_{ik}
\overline{T^{(q)}_{jk}}\text{d}z^p\wedge\text{d}\overline{z^q}.\ee
Therefore, for each $\lambda\in\Gamma(i,r)$ and $1\leq i\leq n$, we have
\be\begin{split}
&S_{\lambda}\big(c_1(E,h),\ldots,c_r(E,h)\big)\\
=&\varphi\big(S_{\lambda}(c_1,\ldots,c_r)\big)\qquad\big((\ref{naturalmap})\big)\\
=&S_{\lambda}\Big(c_1\big(\frac{\sqrt{-1}}{2\pi}(\Omega^i_j)\big),\ldots,
c_r(\big(\frac{\sqrt{-1}}{2\pi}(\Omega^i_j)\big)\Big)\qquad\big((\ref{naturalmap})\big)\\
=&(\frac{\sqrt{-1}}{2\pi})^i\sum\lambda_{\rho,j}
q_{\rho,j,\pi}\bar{q}_{\rho,j,\tau}
\Omega^{\rho_{\pi(1)}}_{\rho_{\tau(1)}}\cdots
\Omega^{\rho_{\pi(i)}}_{\rho_{\tau(i)}}\qquad\big((\ref{griffithsexp}),~ (\ref{griffithsexp2}),~(\ref{FLprop})\big)\\
=&(\frac{\sqrt{-1}}{2\pi})^i\sum\lambda_{\rho,j}
q_{\rho,j,\pi}\bar{q}_{\rho,j,\tau}
T^{(r_1)}_{\rho_{\pi(1)}k_1}\overline{T^{(s_1)}_{\rho_{\tau(1)}k_1}}
\text{d}z^{r_1}\wedge\text{d}\overline{z^{s_1}}\cdots
T^{(r_i)}_{\rho_{\pi(i)}k_i}\overline{T^{(s_i)}_{\rho_{\tau(i)}k_i}}
\text{d}z^{r_i}\wedge\text{d}\overline{z^{s_i}}\qquad\big((\ref{curvatureexpr})\big)\\
=&(\frac{\sqrt{-1}}{2\pi})^i(-1)^{\frac{i(i-1)}{2}}
\sum_{\rho,j}\lambda_{\rho,j}\Big\{\big[\sum_{\pi,k,r}q_{\rho,j,\pi}T^{(r_1)}_{\rho_{\pi(1)}k_1}\cdots
T^{(r_i)}_{\rho_{\pi(i)}k_i}\text{d}z^{r_1}\wedge\cdots\wedge\text{d}z^{r_i}\big]\\
&\wedge\big[\sum_{\tau,k,s}\overline{q_{\rho,j,\tau}T^{(s_1)}_{\rho_{\tau(1)}k_1}
\cdots
T^{(s_i)}_{\rho_{\tau(i)}k_i}\text{d}z^{s_1}
\wedge\cdots\wedge\text{d}z^{s_i}}\big]\Big\}\\
=:&\frac{(\sqrt{-1})^{i^2}}{(2\pi)^i}\sum_{\rho,j}
\lambda_{\rho,j}\psi_{\rho,j}\wedge\overline{\psi_{\rho,j}}.
\end{split}\nonumber\ee
Now $\psi_{\rho,j}$ are $(i,0)$-forms and by the definition $\lambda_{\rho,j}\geq0$ \big(recall (\ref{griffithsexp2})\big), which means that the last expression is a strongly nonnegative $(i,i)$-form. This gives the desired proof and thus completes this article.
\end{proof}

\end{document}